\documentclass[12pt,reqno]{article}

\setlength{\textwidth}{6.3in} \setlength{\textheight}{9.25in}
\setlength{\evensidemargin}{0in} \setlength{\oddsidemargin}{0in}
\setlength{\topmargin}{-.5in}

\usepackage{amsmath,amsthm,amsfonts,amssymb,latexsym,mathrsfs,color}

\newtheorem{thm}{Theorem}[section]
\newtheorem{lem}{Lemma}[section]
\newtheorem{prop}{Proposition}[section]
\newtheorem{conj}{Conjecture}[section]

\theoremstyle{definition}

\theoremstyle{remark}
\newtheorem{rem}{Remark}[section]

\numberwithin{equation}{section}
\newcommand{\lrf}[1]{\left\lfloor #1\right\rfloor}

\newcommand{\la}{\lambda}

\newcommand{\pngu}{G^{-}_n(v)}
\newcommand{\cngu}{G^{\circ}_n(v)}

\linespread{1.3}

\title{On the unimodality of independence polynomials of some graphs
\thanks{Supported partially by the National Science Foundation of China.
\newline\hspace*{5mm}
   {\it Email addresses:}\quad  wangyi@dlut.edu.cn (Y. Wang),
    zhubaoxuan@yahoo.com.cn (B.-X. Zhu)}}
\author{Yi Wang,\quad Bao-Xuan Zhu}
\date{\footnotesize School of Mathematical Sciences,
         Dalian University of Technology,
         Dalian 116024, PR China}

\begin{document}
\maketitle
\begin{abstract}
In this paper we study unimodality problems for the independence
polynomial of a graph, including unimodality, log-concavity and
reality of zeros. We establish recurrence relations and give
factorizations of independence polynomials for certain classes of
graphs. As applications we settle some unimodality conjectures and
problems.
\\
{\sl MSC:}\quad 05C69; 05A20; 12D05
\\
{\sl Keywords:}\quad Independence polynomials; Factorizations;
Unimodality; Log-concavity; Real zeros
\end{abstract}

\section{Introduction}
\hspace*{\parindent}
Throughout this paper all graphs considered are finite and simple.
Graph theoretical terms used but not defined can be found in Bondy and Murty~\cite{BM76}.

An {\it independent set} in a graph $G$ is a set of pairwise non-adjacent vertices.
A {\it maximum independent set} in $G$ is a largest independent set and its size is denoted $\alpha(G)$.
Let $i_k(G)$ denote the number of independent sets of cardinality $k$ in $G$.
Then its generating function
$$I(G;x)=\sum\limits_{k=0}^{\alpha(G)}i_k(G)x^k,\quad i_0(G)=1$$
is called the {\it independence polynomial} of $G$ (Gutman and
Harary~\cite{GH83}).
In general, it is an NP-complete problem to determine the independence
polynomial, since evaluating $\alpha(G)$ is an NP-complete
problem~(\cite{GJ79}).

Let $a_0,a_1,\ldots,a_n$ be a sequence of nonnegative numbers. It is
{\it unimodal} if there is some $m$, called a {\it mode} of the
sequence, such that
$$a_0\le a_1\le\cdots\le a_{m-1}\le a_m\ge a_{m+1}\ge\cdots\ge a_n.$$
It is {\it log-concave} if $a_k^2\ge a_{k-1}a_{k+1}$ for all $1\le k
\le n-1$.  It is {\it symmetric} if $a_k=a_{n-k}$ for $0\le k\le n$.
Clearly, a log-concave sequence of positive numbers is unimodal
(see, e.g., Brenti~\cite{Bre89}). We say that a polynomial
$\sum_{k=0}^na_kx^k$ is {\it unimodal} ({\it log-concave}, {\it
symmetric}, respectively) if the sequence of its coefficients
$a_0,a_1,\ldots,a_n$ is unimodal (log-concave, symmetric,
respectively). A mode of the sequence $a_0,a_1,\ldots,a_n$ is also
called a mode of the polynomial $\sum_{k=0}^na_kx^k$. Unimodality
problems arise naturally in many branches of mathematics and have
been extensively investigated. See Stanley's survey
article~\cite{Sta89} and Brenti's supplement~\cite{Bre94} for
details. A basic approach to unimodality problems is to use Newton's
inequalities: Let $a_0,a_1,\ldots,a_n$ be a sequence of nonnegative
numbers. Suppose that the polynomial $\sum_{k=0}^{n}a_kx^k$ has only
real zeros. Then
$$a_k^2\ge a_{k-1}a_{k+1}\left(1+\frac{1}{k}\right)\left(1+\frac{1}{n-k}\right),\quad k=1,2,\ldots,n-1,$$
and the sequence is therefore log-concave and unimodal (see Hardy,
Littlewood and P\'olya~\cite[p. 104]{HLP52}).

Unimodality problems of graph polynomials have always been of great
interest to researchers in graph theory. For example, it is
conjectured that the chromatic polynomial of a graph is
unimodal~(Read~\cite[p. 68]{Rea68}) and even
log-concave~(Welsh~\cite[p. 266]{Wel76}). It is also well known that
the matching polynomial of a graph has only real zeros~\cite{HL72,
Sch81}. There has been an extensive literature in recent years on
the unimodality problems of independence polynomials (see
\cite{AMSE87,BHN04,BN05,CS07,Ham90,HL94,LM02,LM04CJM,LM04WSEAS,LM05,LM06CN,LM06EJC,LM03,LM07,Man09,Zhu07}
for instance). The independence polynomial is closely related to the
matching polynomial. Actually, the matching polynomial
of a graph $G$ coincides with the independence polynomial of the
line graph of $G$. In other words, the independence polynomial can
be regarded as a generalization of the matching polynomial. Wilf
asked whether the independence polynomials share the same
unimodality property as the matching polynomials. However Alavi,
Malde, Schwenk, Erd\H{o}s~\cite{AMSE87} provided examples to
demonstrate that independence polynomials are not unimodal in
general. They proposed the conjecture that the independence
polynomial of every tree or forest is unimodal. It is also conjectured
that the independence polynomial of every very well-covered graph is
unimodal (see Levit and Mandrescu~\cite{LM03} for details).
Nevertheless, the independence polynomials for certain special classes of graphs are unimodal
and even have only real zeros.
For example, the independence polynomial of a line graph has only real zeros.
Chudnovsky and Seymour~\cite{CS07} further showed that the independence polynomial of a claw-free graph has only real zeros
(the line graph of any graph is claw-free).
Brown and Nowakowski~\cite{BN05} showed that
while the independence polynomial of almost every graph of order $n$ has a nonreal zero,
the average independence polynomials always have all real and simple zeros.
Brown {\it et al.}~\cite{BDN00} showed that
for every well-covered graph $G$ there is a well-covered graph $H$,
with $\alpha(G)=\alpha(H)$, such that $G$ is an induced subgraph of $H$ and
$I(H; x)$ has all real and simple zeros.
Mandrescu~\cite{Man09} showed that given a graph $G$ whose $I(G; x)$ has only real zeros,
there are infinitely many graphs $H$ having $G$ as a subgraph, such that $I(H; x)$ has only real zeros.

The present paper is devoted to the study of unimodality problems
for the independence polynomial of a graph, including unimodality,
log-concavity and reality of zeros. We establish recurrence
relations and give factorizations of independence polynomials for
certain classes of graphs. As applications we settle some
unimodality conjectures and problems in the literature.
\section{Lemmas}
\hspace*{\parindent}
To explain our approach and prove the theorems, we state some simple
lemmas without proofs in this section.

Let $G=(V,E)$ be a simple graph and $v\in V$. Denote $N(v)=\{w:
\text{$w\in V$ and $vw\in E$}\}$ and $N[v]=N(v)\cup \{v\}$. The
following equalities are fundamental in calculating of the
independence polynomials and will be repeatedly used in the sequel.
\begin{lem}[\cite{GH83,HL94}]\label{lem-IP}
Let $G=(V,E)$ be a simple graph. Then
\begin{itemize}
  \item [\rm (i)] $I(G;x)=I(G-v;x)+xI(G-N[v];x)$ for arbitrary $v\in V$;
  \item [\rm (ii)] $I(G;x)=I(G-e;x)-x^2I(G-N(u)\bigcup N(v);x)$ for arbitrary $e=uv\in
  E$.
\end{itemize}
\end{lem}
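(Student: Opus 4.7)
The plan is to prove both identities by partitioning the independent sets of the graph on the left-hand side into disjoint classes that correspond bijectively to those counted by each term on the right-hand side, matching cardinalities so the generating functions agree.

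For part (i), I would fix $v \in V$ and split the collection of independent sets of $G$ according to whether $v$ lies in the set. Independent sets $S$ with $v \notin S$ are exactly the independent sets of $G-v$, contributing $I(G-v;x)$. For an independent set $S$ with $v \in S$, the independence condition forces $S \cap N(v) = \emptyset$, so $S \setminus \{v\}$ is an independent set of $G - N[v]$; conversely any independent set of $G - N[v]$ together with $v$ is an independent set of $G$ containing $v$. This bijection increases the cardinality by one, hence contributes $xI(G - N[v];x)$. Summing yields (i).

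For part (ii), I would instead compare independent sets of $G$ with those of $G - e$. Every independent set of $G$ is independent in $G-e$, so independent sets of $G-e$ consist of those independent in $G$ together with the extra sets $S$ that are independent in $G-e$ but not in $G$. The only obstruction is the edge $e = uv$, so an extra set must satisfy $\{u,v\} \subseteq S$. Such an $S$ is independent in $G-e$ iff $S \setminus \{u,v\}$ avoids $(N(u) \cup N(v)) \setminus \{u,v\}$ and is independent, i.e., iff $S \setminus \{u,v\}$ is an independent set of $G - (N(u) \cup N(v))$. This bijection increases cardinality by two, so the extra sets contribute $x^2 I(G - N(u) \cup N(v);x)$ to $I(G-e;x)$. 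Rearranging gives (ii).

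The argument is essentially a routine counting decomposition, and I do not expect any genuine obstacle. The one place that requires a moment's care is confirming in (ii) that $S \setminus \{u,v\}$ being independent in $G$ is equivalent to it being independent in the induced subgraph $G - (N(u) \cup N(v))$: since $N(u) \cup N(v)$ contains both $u$ and $v$ (when $e = uv \in E$), the vertex removal is well-defined, and the condition $S \cap N(u) \subseteq \{v\}$ together with $S \cap N(v) \subseteq \{u\}$ is exactly what is needed to identify the remaining vertices with a subset of $V \setminus (N(u) \cup N(v))$. Once this is observed, both identities fall out immediately, which explains why the authors state them without proof.
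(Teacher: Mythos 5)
Your proof is correct and is the standard double-counting argument; the paper itself states this lemma without proof (citing \cite{GH83,HL94}), so there is nothing to diverge from. Both decompositions --- splitting independent sets by whether they contain $v$ in (i), and comparing the independent sets of $G$ and $G-e$ via the sets containing both endpoints of $e$ in (ii) --- are handled correctly, including the point that $u,v\in N(u)\cup N(v)$ so the residual set lives in $G-(N(u)\cup N(v))$.
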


Lemma~\ref{lem-IP} can be used to establish the recurrence relations
for independence polynomials for some classes of graphs. For
example, let $P_n$ be a path with $n$ vertices. Then $I(P_0;x)=1,
I(P_1;x)=1+x$ and
\begin{equation}\label{rr-path}
I(P_n;x)=I(P_{n-1};x)+xI(P_{n-2};x),\quad n=2,3,4,\ldots
\end{equation}
by Lemma~\ref{lem-IP}~(i). For such a sequence of polynomials
satisfying certain recurrence relation, there are various methods
for showing the sequence has only real zeros. The following result
is a special case of Corollary 2.4 in Liu and Wang~\cite{LW07}.
\begin{lem}\label{lem-LW}
Let $\{Q_n(x)\}_{n\ge 0}$ be a sequence of polynomials with
nonnegative coefficients such that
\begin{itemize}
\item [\rm (i)]
$Q_{n}(x)=a_n(x)Q_{n-1}(x)+b_n(x)Q_{n-2}(x)$ for $n\ge 2$;
\item [\rm (ii)]
$Q_0(x)$ is a constant and $\deg Q_{n-1}\le\deg Q_n\le\deg
Q_{n-1}+1$.
\end{itemize}
If $b_n(x)\le 0$ whenever $x\le 0$, then $\{Q_n(x)\}$ has only real
zeros. Furthermore, the zeros of $Q_n(x)$ are separated by that of
$Q_{n-1}(x)$.
\end{lem}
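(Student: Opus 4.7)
The plan is to prove by induction on $n \ge 1$ a slightly strengthened statement: each of $Q_{n-1}$ and $Q_n$ has only real (necessarily nonpositive, since their coefficients are nonnegative) zeros, and the zeros of $Q_n$ strictly interlace with those of $Q_{n-1}$. The base case $n=1$ follows directly from (ii): $Q_0$ is a positive constant, $Q_1$ has degree at most one with nonnegative coefficients, so it has at most one real zero, which is nonpositive, and strict interlacing is vacuous.

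For the inductive step, fix $n \ge 2$ and let the zeros of $Q_{n-1}$ be $r_1 < r_2 < \cdots < r_m$, where $m = \deg Q_{n-1}$. Evaluating the recurrence at $r_i$ kills the first term and leaves
\[
 Q_n(r_i) \;=\; b_n(r_i)\, Q_{n-2}(r_i).
\]
Since $r_i \le 0$, the hypothesis gives $b_n(r_i) \le 0$, while the inductive strict interlacing forces $Q_{n-2}(r_i)$ to be nonzero with sign alternating in $i$. Consequently the values $Q_n(r_1), \ldots, Q_n(r_m)$ alternate (weakly) in sign, and the intermediate value theorem produces $m-1$ distinct real zeros of $Q_n$, one in each open interval $(r_i, r_{i+1})$.

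The remaining zeros, one if $\deg Q_n = m$ and two if $\deg Q_n = m+1$, come from the boundary behavior. The leading coefficient of $Q_n$ is positive (an easy induction using (ii) and the recurrence), so the sign of $Q_n(x)$ for $x$ very negative is $(-1)^{\deg Q_n}$, while $Q_n(0) \ge 0$. A short parity analysis, combined with the already-computed signs of $Q_n(r_1)$ and $Q_n(r_m)$ (using that $Q_{n-2}$ is positive to the right of all its zeros), produces one additional zero in $(r_m, 0]$ and, in the second subcase, one further zero in $(-\infty, r_1)$. This accounts for all $\deg Q_n$ zeros, so they are all real, and strict interlacing with the $r_i$'s is immediate from the construction.

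The step I expect to require the most care is this boundary accounting, particularly the degenerate situation in which $b_n(r_i) = 0$ for some $r_i$, which weakens the alternation above to a non-strict one. I would handle that by first proving the result under the strict assumption $b_n(x) < 0$ on $(-\infty, 0]$, where the parity analysis is unambiguous, and then deducing the general case by replacing $b_n$ with $b_n - \varepsilon$ and letting $\varepsilon \to 0^+$: interlacing of real zeros is a closed condition under coefficient-wise convergence, so it passes to the limit.
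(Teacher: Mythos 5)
Your generic-case argument (evaluate the recurrence at the zeros $r_1<\cdots<r_m$ of $Q_{n-1}$, get sign alternation from $Q_n(r_i)=b_n(r_i)Q_{n-2}(r_i)$, apply the intermediate value theorem, and finish with a boundary count) is the standard route; note that the paper itself gives no proof of this lemma, quoting it as a special case of Corollary 2.4 of Liu and Wang \cite{LW07}, where precisely this induction is carried out with the degenerate cases treated in full. The genuine gap in your write-up is that the strengthened inductive statement --- strict interlacing, hence simple zeros of $Q_{n-1}$ and no common zeros of $Q_{n-1}$ and $Q_{n-2}$ --- is simply false under the stated hypotheses, and the degeneracy is not confined to the case $b_n(r_i)=0$ that you flag. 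For instance $Q_0=1$, $Q_1=1+x$, $a_2(x)=1+x$, $b_2(x)\equiv 0$ satisfies (i), (ii) and $b_2\le 0$ on $x\le 0$, yet $Q_2=(1+x)^2$ has a double zero coinciding with the zero of $Q_1$; likewise $Q_0=1$, $Q_1=x$, $a_2(x)=b_2(x)=x$ gives $Q_2=x^2+x$, which shares the zero $0$ with $Q_1$, so at the next stage $Q_{n-2}(r_i)$ vanishes even though $b_n(r_i)\neq 0$. So multiple zeros and common zeros genuinely occur, and the entire induction, not just one step, has to survive them.

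The proposed repair --- replace $b_n$ by $b_n-\varepsilon$ and let $\varepsilon\to 0^{+}$ --- does not do this as described. Perturbing only the current $b_n$ presupposes that $Q_{n-1}$ and $Q_{n-2}$ already interlace strictly, which the examples above show may fail; so the perturbation would have to be applied to the whole sequence. But then the perturbed polynomials $\tilde{Q}_k$ need not have nonnegative coefficients, hence need not have all their zeros in $(-\infty,0]$, and the argument loses exactly the fact it needs, namely that $b_{k+1}\le 0$ at the zeros of $\tilde{Q}_k$: e.g.\ $Q_2=x^2+x$ becomes $x^2+x-\varepsilon$, which has a positive zero. Even perturbing only at the last step, the limit $\varepsilon\to 0^{+}$ returns merely weak interlacing of $Q_n$ with $Q_{n-1}$, so the strict hypothesis is unavailable at stage $n+1$; the limiting argument and the induction do not commute. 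To make your route rigorous you must either formulate the inductive statement with a notion of interlacing that tolerates multiple and common zeros and redo the sign-and-counting analysis accordingly (this is the actual content of Liu--Wang's result), or set up a global perturbation that provably keeps all zeros of every member of the sequence nonpositive. A smaller omission: your boundary analysis uses that the largest zero of $Q_{n-1}$ lies to the right of all zeros of $Q_{n-2}$, so this orientation of the interlacing must be carried explicitly as part of the inductive hypothesis.
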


From Lemma~\ref{lem-LW} it immediately follows a well-known result
that $I(P_n;x)$ has only real zeros, and is therefore log-concave
and unimodal by Newton's inequalities. However, Lemma~\ref{lem-LW}
is useless in showing the unimodality and log-concavity of
polynomials when the condition (ii) is not satisfied. Hence it is
necessary to use other proof techniques for attacking unimodality
problems on sequences of polynomials.

The following two lemmas are elementary but play the key role in our
approach.
\begin{lem}[\cite{Bru92}]\label{lem-BF}
Let $\{z_n\}_{n\ge 0}$ be a sequence satisfying the linear
recurrence relation
$$z_n=az_{n-1}+bz_{n-2},\quad n=2,3,\ldots.$$
If $a^2+4b>0$, then the closed form for the sequence is
$$z_n=\frac{(z_1-z_0\la_2)\la_1^n+(z_0\la_1-z_1)\la_2^n}{\la_1-\la_2},\quad n=0,1,2,\ldots,$$
where
$$\la_1=\frac{a+\sqrt{a^2+4b}}{2},\quad
\la_2=\frac{a-\sqrt{a^2+4b}}{2}$$ are the roots of quadratic
equation $\la^2-a\la-b=0$.
\end{lem}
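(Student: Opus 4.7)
The plan is to solve the two-term linear recurrence by the classical characteristic-equation method and then pin down the two free constants using the initial values $z_0$ and $z_1$.

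First I would observe that, by definition, $\la_1$ and $\la_2$ satisfy $\la_i^2 = a\la_i + b$, so multiplying by $\la_i^{n-2}$ yields $\la_i^n = a\la_i^{n-1} + b\la_i^{n-2}$ for each $i\in\{1,2\}$ and every $n\ge 2$. Consequently, for any constants $c_1,c_2$ the sequence $w_n := c_1\la_1^n + c_2\la_2^n$ automatically satisfies the same recurrence $w_n = aw_{n-1} + bw_{n-2}$. The hypothesis $a^2+4b>0$ ensures that $\la_1$ and $\la_2$ are distinct (and real), so the $2\times 2$ Vandermonde system
\begin{equation*}
c_1 + c_2 = z_0, \qquad c_1\la_1 + c_2\la_2 = z_1
\end{equation*}
has nonzero determinant $\la_2-\la_1$ and therefore a unique solution, which a direct computation identifies as $c_1 = (z_1 - z_0\la_2)/(\la_1-\la_2)$ and $c_2 = (z_0\la_1 - z_1)/(\la_1-\la_2)$.

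Next I would close the argument by a uniqueness step: any sequence satisfying the recurrence is determined by $z_0$ and $z_1$ via a trivial induction on $n$, since $z_n$ is explicitly expressed in terms of $z_{n-1}$ and $z_{n-2}$. Because the sequence $\{w_n\}$ constructed above satisfies the same recurrence and agrees with $\{z_n\}$ at $n=0$ and $n=1$, it must agree with $\{z_n\}$ for every $n$, giving the stated closed form.

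There is no substantive obstacle here; the full force of the assumption $a^2+4b>0$ is used only to guarantee $\la_1\neq\la_2$, so that the denominator $\la_1-\la_2$ is nonzero and the Vandermonde system is nonsingular. (The excluded case $a^2+4b=0$ would require a modified closed form involving a $n\la^{\,n-1}$ term, but this case is ruled out by hypothesis.)
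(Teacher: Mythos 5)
Your proof is correct and complete: the characteristic-root ansatz, the Vandermonde solve for $c_1,c_2$ (which indeed yields $c_1=(z_1-z_0\la_2)/(\la_1-\la_2)$ and $c_2=(z_0\la_1-z_1)/(\la_1-\la_2)$), and the induction-based uniqueness step together establish the closed form, with $a^2+4b>0$ used exactly where needed to ensure $\la_1\neq\la_2$. The paper states this lemma without proof, citing \cite{Bru92}, and your argument is precisely the standard one given there, so there is nothing to compare beyond noting full agreement.
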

\begin{rem}
It is clear that $\la_1+\la_2=a$ and $\la_1\la_2=-b$.
\end{rem}
\begin{lem}[\cite{BC55}]\label{lem-decomp}
Let $\la_1,\la_2\in\mathbb{R}$ and $n\in\mathbb{N}$.
\begin{itemize}
\item[\rm (i)]
If $n$ is odd, then
$\la_1^n-\la_2^n=(\la_1-\la_2)\prod_{s=1}^{\frac{n-1}{2}}\left[(\la_1+\la_2)^2-4\la_1\la_2\cos^2\frac{s\pi}{n}\right]$.
\item[\rm (ii)]
If $n$ is even, then
$\la_1^n-\la_2^n=(\la_1-\la_2)(\la_1+\la_2)\prod_{s=1}^{\frac{n-2}{2}}\left[(\la_1+\la_2)^2-4\la_1\la_2\cos^2\frac{s\pi}{n}\right]$.
\item[\rm (iii)]
If $n$ is odd, then
$\la_1^n+\la_2^n=(\la_1+\la_2)\prod_{s=1}^{\frac{n-1}{2}}\left[(\la_1+\la_2)^2-4\la_1\la_2\cos^2\frac{(2s-1)\pi}{2n}\right]$.
\item[\rm (iv)]
If $n$ is even, then
$\la_1^n+\la_2^n=\prod_{s=1}^{\frac{n}{2}}\left[(\la_1+\la_2)^2-4\la_1\la_2\cos^2\frac{(2s-1)\pi}{2n}\right]$.
\end{itemize}
\end{lem}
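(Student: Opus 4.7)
The plan is to derive all four identities simultaneously from the complex factorizations
$$x^n - y^n = \prod_{k=0}^{n-1}(x - \omega^k y), \qquad x^n + y^n = \prod_{k=0}^{n-1}(x - \eta^{2k+1} y),$$
where $\omega = e^{2\pi i/n}$ is a primitive $n$-th root of unity and $\eta = e^{i\pi/n}$, so the $\eta^{2k+1}$ range over the $n$-th roots of $-1$. Substituting $x = \la_1$ and $y = \la_2$ expresses each of $\la_1^n \mp \la_2^n$ as a product of $n$ linear factors over $\mathbb{C}$, and the lemma will follow by grouping complex-conjugate pairs of linear factors into real quadratics of the prescribed shape.

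The shape of the quadratic factors in the statement comes from a single trigonometric rearrangement. For any unit complex $\zeta = e^{i\theta}$,
$$(\la_1 - \zeta\la_2)(\la_1 - \bar\zeta\la_2) = \la_1^2 - 2\la_1\la_2\cos\theta + \la_2^2 = (\la_1+\la_2)^2 - 4\la_1\la_2\cos^2(\theta/2),$$
where the second equality is the double-angle identity $\cos\theta = 2\cos^2(\theta/2) - 1$. Every conjugate pair of roots in the factorizations above therefore contributes one factor of exactly the form appearing in the lemma.

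What remains is the bookkeeping of which roots are real and how the pairing parameter $s$ should be reindexed in each case. For (i), the only real $n$-th root of unity is $\omega^0 = 1$, producing the prefactor $\la_1 - \la_2$; the remaining roots form $(n-1)/2$ conjugate pairs $\{\omega^s, \omega^{-s}\}$ with $\theta/2 = s\pi/n$ for $s = 1, \ldots, (n-1)/2$. For (ii), both $\omega^0 = 1$ and $\omega^{n/2} = -1$ are real, yielding the prefactor $(\la_1-\la_2)(\la_1+\la_2)$ and leaving $(n-2)/2$ conjugate pairs indexed the same way. For (iii), the only real $n$-th root of $-1$ is $-1$ itself (at $k = (n-1)/2$), contributing $\la_1 + \la_2$, while the remaining $(n-1)/2$ conjugate pairs give $\theta/2 = (2s-1)\pi/(2n)$. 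For (iv), no $n$-th root of $-1$ is real and all $n$ linear factors partition into $n/2$ conjugate pairs with the same cosine argument. I do not anticipate a substantive obstacle; the only delicate step is verifying that the reindexing parameter $s$ sweeps out the cosine arguments in the stated range exactly once, which I would confirm for small $n$ (say $n = 1, 2, 3, 4$) before writing up the general argument.
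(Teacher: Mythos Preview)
Your proof is correct: factoring $x^n\mp y^n$ over the $n$-th roots of $\pm 1$, isolating the real root(s), and collapsing each conjugate pair $(\la_1-\zeta\la_2)(\la_1-\bar\zeta\la_2)$ via the double-angle identity into $(\la_1+\la_2)^2-4\la_1\la_2\cos^2(\theta/2)$ is exactly the right mechanism, and your case analysis of which roots are real (one, two, one, none in cases (i)--(iv)) and how the index $s$ runs is accurate.

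As for comparison with the paper: there is nothing to compare against. The paper quotes this lemma from Barnard and Child's \emph{Higher Algebra} and explicitly declines to prove it (the section opens with ``we state some simple lemmas without proofs''). Your argument is the standard textbook derivation and is presumably close to what appears in the cited source. One small stylistic point: since the identities are between polynomials in $\la_1,\la_2$, it suffices to verify them as formal polynomial identities (or for complex $\la_1,\la_2$), so the hypothesis $\la_1,\la_2\in\mathbb{R}$ plays no role in your proof---you might remark on that, or simply not invoke it.
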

\begin{rem}\label{divide}
If $\la_1\la_2\ge 0$, then
\begin{equation}\label{d-decomp}\frac{\la_1^n-\la_2^n}{\la_1-\la_2}
=\prod_{s=1}^{n-1}\left(\la_1+\la_2-2\sqrt{\la_1\la_2}\cos\frac{s\pi}{n}\right)
=\prod_{s=1}^{n-1}\left(\la_1+\la_2+2\sqrt{\la_1\la_2}\cos\frac{s\pi}{n}\right).
\end{equation}
\end{rem}

Applying Lemmas~\ref{lem-BF} and \ref{lem-decomp} to the recursion
(\ref{rr-path}) we may obtain the factorization of independence
polynomials for paths
\begin{equation}\label{exp-path}
I(P_n;x)=\prod_{s=1}^{\lrf{(n+1)/2}}\left(1+4x\cos^2{\frac{s\pi}{n+2}}\right),
\end{equation}
where $\lrf{\cdot}$ is the floor function (see Remark~\ref{rem-p}).

The independence polynomial of a disconnected graph is the product
of the independence polynomials of its connected components. So the
following result will be very useful in solving the unimodality
problems for independence polynomials. We refer the reader to
Stanley's survey article~\cite{Sta89} for further information.
\begin{lem}\label{product}
Let $f(x)$ and $g(x)$ be polynomials with positive coefficients.
\begin{itemize}
\item [\rm (i)] If both $f(x)$ and $g(x)$ have only real zeros, then
so does their product $f(x)g(x)$.
\item [\rm (ii)] If both $f(x)$ and $g(x)$ are log-concave, then
so is their product $f(x)g(x)$.
\item [\rm (iii)] If $f(x)$ is log-concave and $g(x)$ is unimodal, then
their product $f(x)g(x)$ is unimodal.
\item [\rm (iv)] If both $f(x)$ and $g(x)$ are symmetric and unimodal, then
so is their product $f(x)g(x)$.
\end{itemize}
\end{lem}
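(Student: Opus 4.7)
The proof naturally splits into four parts. Part (i) is immediate: the zero multiset of $f(x)g(x)$ is the union of the zero multisets of $f$ and $g$, so if both consist entirely of real numbers, so does the union. The remaining parts all concern the coefficient sequence $c_k=\sum_{i+j=k}a_ib_j$ of the product, where $f(x)=\sum a_ix^i$ and $g(x)=\sum b_jx^j$.

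For (ii), the plan is to expand
\[
c_k^2-c_{k-1}c_{k+1}=\sum_{i,j}\bigl(a_ia_jb_{k-i}b_{k-j}-a_{i-1}a_{j+1}b_{k-i+1}b_{k-j-1}\bigr)
\]
after the natural reindexing, and then apply the classical pairing argument: symmetrize over the swap $(i,j)\leftrightarrow(j,i)$ and invoke the log-concavity inequalities $a_ia_j\ge a_{i-1}a_{j+1}$ for $i\le j$, together with the analogous inequalities for the $b$-sequence, to realize each grouped contribution as manifestly non-negative. Positivity of the coefficients is what ensures the comparisons can be carried out in the required direction.

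For (iii), I would examine the first differences $c_{k+1}-c_k=\sum_i a_i(b_{k+1-i}-b_{k-i})$. Unimodality of $g$ forces the inner sequence $b_{k+1-i}-b_{k-i}$ (in $i$) to switch from non-negative to non-positive exactly once, and combining this with log-concavity of $f$ shows that $c_{k+1}-c_k$, as a function of $k$, itself changes sign at most once from non-negative to non-positive, which is the definition of unimodality.

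For (iv), symmetry of the product is automatic. For unimodality I intend to invoke the standard decomposition: every symmetric unimodal polynomial of degree $n$ with non-negative coefficients admits a representation
\[
\sum_{k=0}^{\lfloor n/2\rfloor}\gamma_k\bigl(x^k+x^{k+1}+\cdots+x^{n-k}\bigr),\qquad \gamma_k\ge0.
\]
A direct coefficient inspection shows that the product of any two such ``tent'' polynomials is again symmetric and unimodal, and bilinearity of polynomial multiplication together with non-negativity of the $\gamma_k$ extends the conclusion to arbitrary symmetric unimodal factors. The main obstacle is the index bookkeeping for (ii), where the pairing must be chosen carefully so that the quadratic form is displayed as a sum of non-negative terms; the other three parts are comparatively routine once the correct framework is in place.
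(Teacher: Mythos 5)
The paper never proves this lemma: it appears in Section~2 among the results ``stated without proofs,'' with the reader referred to Stanley's survey \cite{Sta89}. So there is no in-paper argument to compare against; what you have written is a self-contained substitute, and all four of your routes are the standard ones. Part (i) is indeed immediate. Part (iii) is the variation-diminishing property of the positive log-concave kernel $(a_i)$ applied to the difference sequence of $g$; this is correct, but note that the phrase ``combining this with log-concavity of $f$'' carries the entire weight of the argument --- the fact that convolving a once-sign-changing sequence with a positive log-concave sequence cannot create a second sign change is exactly the nontrivial content and needs its own short proof or a citation. Part (iv) via the decomposition into tents $x^k+\cdots+x^{n-k}$ with $\gamma_k=a_k-a_{k-1}\ge 0$, all centred at $n/2$, together with the observation that the product of two tents is a symmetric unimodal trapezoid about the common centre, is complete and correct.

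One concrete warning about (ii): the symmetrization over the swap $(i,j)\leftrightarrow(j,i)$, as literally described, does not produce nonnegative blocks. After the natural reindexing one gets $c_k^2-c_{k-1}c_{k+1}=\sum_{i,j}\bigl(a_ia_j-a_{i-1}a_{j+1}\bigr)b_{k-i}b_{k-j}$, and pairing $(i,j)$ with $(j,i)$ yields contributions proportional to $2a_ia_j-a_{i-1}a_{j+1}-a_{i+1}a_{j-1}$, which can be negative: for the log-concave positive sequence $a=(1,10,1)$ and $(i,j)=(0,2)$ this equals $2-0-100<0$. The grouping that works is the Cauchy--Binet one: write $c_k^2-c_{k-1}c_{k+1}$ as the determinant of $\bigl(\begin{smallmatrix}c_k&c_{k+1}\\ c_{k-1}&c_k\end{smallmatrix}\bigr)$, factor this matrix as the product of the two Toeplitz arrays built from $(a_i)$ and $(b_j)$, and expand to obtain
$$c_k^2-c_{k-1}c_{k+1}=\sum_{t<t'}\bigl(a_ta_{t'-1}-a_{t-1}a_{t'}\bigr)\bigl(b_{k-t}b_{k+1-t'}-b_{k+1-t}b_{k-t'}\bigr),$$
in which each factor is nonnegative because log-concavity of a positive sequence is equivalent to the monotonicity of the consecutive ratios (positivity also disposes of the out-of-range indices). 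You correctly identified the bookkeeping in (ii) as the delicate point; this is the pairing you need, and with it the proposal is sound.
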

\section{Main Results and Applications}
\hspace*{\parindent}
We present our main results in this section. As applications we
settle certain conjectures and problems that have occurred in the
literature.
\subsection{Concatenation of Graphs}
\hspace*{\parindent}
Let $G$ be a simple graph and $v$ a vertex of $G$. Let $\pngu$
denote the graph obtained by identifying (gluing) each vertex of the
path $P_n$ with the vertex $v$ of $n$ copies $G$ respectively.
We call $\pngu$ the $n$-{\it concatenation} of the graph $G$ on the
vertex $v$. For example, the vertebrated graph $V_n^{(m)}$ defined
in~\cite{Zhu07} is the $n$-concatenation of the star graph $K_{1,m}$
on the center $v$, and the $(n,m)$-firecracker graph $F_n^{(m)}$
defined in~\cite{CLY97} is the $n$-concatenation of the star graph
$K_{1,m}$ on a leaf $v$ (see Figure 1).

\begin{center}
\setlength{\unitlength}{1cm}
\begin{picture}(22,2.5)(-0.5,-2.0)
\thicklines \put(1,0){\line(1,0){2.1}}
 \thicklines\put(4.05,0){\line(1,0){2}}
\put(1,0){\circle*{0.2}}\put(2.7,0){\circle*{0.2}}

\put(3.3,0){\circle*{0.1}}\put(3.6,0){\circle*{0.1}}
\put(3.9,0){\circle*{0.1}}
\put(4.4,0){\circle*{0.2}}\put(6.1,0){\circle*{0.2}}
 \put(1.1,0.2){$v_1$} \put(2.8,0.2){$v_2$}
\put(4.5,0.2){$v_{n-1}$} \put(6.2,0.2){$v_n$}
 \put(1,0){\line(0,1){0.8}}\put(1,0){\line(-1,-1){0.6}}\put(1,0){\line(1,-1){0.6}}
 \put(2.7,0){\line(0,1){0.8}}\put(2.7,0){\line(-1,-1){0.6}}\put(2.7,0){\line(1,-1){0.6}}
 \put(4.4,0){\line(0,1){0.8}}\put(4.4,0){\line(-1,-1){0.6}}\put(4.4,0){\line(1,-1){0.6}}
 \put(6.1,0){\line(0,1){0.8}}\put(6.1,0){\line(-1,-1){0.6}}\put(6.1,0){\line(1,-1){0.6}}
\put(1,0.8){\circle*{0.2}}\put(2.7,0.8){\circle*{0.2}}
\put(4.4,0.8){\circle*{0.2}}\put(6.1,0.8){\circle*{0.2}}
\put(0.4,-0.6){\circle*{0.2}}\put(1.6,-0.6){\circle*{0.2}}
\put(2.1,-0.6){\circle*{0.2}}\put(3.3,-0.6){\circle*{0.2}}
\put(3.8,-0.6){\circle*{0.2}}\put(5.0,-0.6){\circle*{0.2}}
\put(5.5,-0.6){\circle*{0.2}}\put(6.7,-0.6){\circle*{0.2}}
\put(3.5,-1.5){$V_n^{(3)}$}
\thicklines \put(9,0.8){\line(1,0){2.1}}
 \thicklines\put(12.05,0.8){\line(1,0){2}}
\put(9,0){\circle*{0.2}}\put(10.7,0){\circle*{0.2}}

\put(11.3,0.8){\circle*{0.1}}\put(11.6,0.8){\circle*{0.1}}
\put(11.9,0.8){\circle*{0.1}}
\put(12.4,0){\circle*{0.2}}\put(14.1,0){\circle*{0.2}}
 \put(9.0,1.0){$v_1$} \put(10.7,1.0){$v_2$}
\put(12.4,1.0){$v_{n-1}$} \put(14.1,1.0){$v_n$}
 \put(9,0){\line(0,1){0.8}}\put(9,0){\line(-1,-1){0.6}}\put(9,0){\line(1,-1){0.6}}
 \put(10.7,0){\line(0,1){0.8}}\put(10.7,0){\line(-1,-1){0.6}}\put(10.7,0){\line(1,-1){0.6}}
 \put(12.4,0){\line(0,1){0.8}}\put(12.4,0){\line(-1,-1){0.6}}\put(12.4,0){\line(1,-1){0.6}}
 \put(14.1,0){\line(0,1){0.8}}\put(14.1,0){\line(-1,-1){0.6}}\put(14.1,0){\line(1,-1){0.6}}
\put(9,0.8){\circle*{0.2}}\put(10.7,0.8){\circle*{0.2}}
\put(12.4,0.8){\circle*{0.2}}\put(14.1,0.8){\circle*{0.2}}
\put(8.4,-0.6){\circle*{0.2}}\put(9.6,-0.6){\circle*{0.2}}
\put(10.1,-0.6){\circle*{0.2}}\put(11.3,-0.6){\circle*{0.2}}
\put(11.8,-0.6){\circle*{0.2}}\put(13.0,-0.6){\circle*{0.2}}
\put(13.5,-0.6){\circle*{0.2}}\put(14.7,-0.6){\circle*{0.2}}
 \put(11.5,-1.5){$F_n^{(3)}$}
\end{picture}
Figure~1. A vertebrated graph and a firecracker graph.
\end{center}
\begin{thm}\label{thm-PGu}
Let $\pngu$ be the $n$-concatenation of the graph $G$ on the vertex
$v$. Then
\begin{equation}\label{exp-PGu}
I(\pngu;x)=I^{\lfloor\frac{n}{2}\rfloor}(G-v;x)
\prod_{s=1}^{\lfloor\frac{n+1}{2}\rfloor}
\left[I(G-v;x)+4xI(G-N[v];x)\cos^2\frac{s\pi}{n+2}\right].
\end{equation}
\end{thm}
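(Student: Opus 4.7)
The plan is to set $f_n := I(\pngu;x)$ and carry out a three-step argument: derive a two-term linear recurrence in $n$ from Lemma~\ref{lem-IP}(i), convert it to a Binet-type closed form via Lemma~\ref{lem-BF}, and unfold the resulting difference of powers using Lemma~\ref{lem-decomp}. Applying Lemma~\ref{lem-IP}(i) at the endpoint $v_n$ of the spine path, deletion of $v_n$ splits $\pngu$ into the disjoint union of $G_{n-1}^-(v)$ and a copy of $G-v$ (the $n$th glued copy with its central vertex gone), while deletion of $N[v_n] = \{v_n,v_{n-1}\}\cup N_G(v)$, the last set taken inside the $n$th copy, leaves three disjoint pieces: $G-N[v]$, a copy of $G-v$ (the $(n-1)$st glued copy, now severed), and $G_{n-2}^-(v)$. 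Writing $A := I(G-v;x)$ and $B := I(G-N[v];x)$, we obtain
\begin{equation*}
f_n = Af_{n-1} + xAB\,f_{n-2},\qquad f_0 = 1,\quad f_1 = A+xB,
\end{equation*}
where the value of $f_1 = I(G;x)$ comes from one more application of Lemma~\ref{lem-IP}(i) at $v\in G$.

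Next, apply Lemma~\ref{lem-BF} with $a = A$ and $b = xAB$; let $\lambda_1,\lambda_2$ be the roots of $\lambda^2 - A\lambda - xAB = 0$, so that $\lambda_1+\lambda_2 = A$ and $\lambda_1\lambda_2 = -xAB$. The key observation is that dividing the characteristic equation $\lambda_i^2 = A\lambda_i + xAB$ by $A$ yields $\lambda_i + xB = \lambda_i^2/A$. Therefore
\begin{equation*}
f_1 - \lambda_2 = \lambda_1 + xB = \frac{\lambda_1^2}{A},\qquad \lambda_1 - f_1 = -(\lambda_2+xB) = -\frac{\lambda_2^2}{A},
\end{equation*}
and the closed form of Lemma~\ref{lem-BF} collapses cleanly to
\begin{equation*}
f_n = \frac{1}{A}\cdot\frac{\lambda_1^{n+2}-\lambda_2^{n+2}}{\lambda_1-\lambda_2}.
\end{equation*}

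Finally, apply Lemma~\ref{lem-decomp}(i) or (ii) to $\lambda_1^{n+2}-\lambda_2^{n+2}$ according to the parity of $n+2$. Each bracketed factor equals $(\lambda_1+\lambda_2)^2 - 4\lambda_1\lambda_2\cos^2\frac{s\pi}{n+2} = A\bigl[A + 4xB\cos^2\frac{s\pi}{n+2}\bigr]$, contributing one power of $A$ per bracket; in the even case there is an additional $\lambda_1+\lambda_2 = A$ to absorb. A routine parity count then shows that, after the external division by $A$, the residual power of $A$ is exactly $\lfloor n/2\rfloor$ while the number of brackets is $\lfloor (n+1)/2\rfloor$ in both parities, which is~(\ref{exp-PGu}).

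I expect the collapse in the middle step to be the main obstacle: noticing that $f_1 = A+xB$ conspires with the characteristic roots to produce the numerator $\lambda_1^{n+2}-\lambda_2^{n+2}$ (rather than $\lambda_1^n-\lambda_2^n$) is what brings the expression into the exact shape needed by Lemma~\ref{lem-decomp}. A minor technicality, that $a^2+4b = A(A+4xB)$ may vanish for special $x$, is harmless since~(\ref{exp-PGu}) is a polynomial identity in $x$: one may reason formally over $\mathbb{Q}(A,B,x)$ (with $A\neq 0$ because $I(G-v;x)$ has constant term $1$) or invoke continuity at the exceptional values.
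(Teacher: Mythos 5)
Your proposal is correct and follows essentially the same route as the paper: the recurrence $f_n=af_{n-1}+abf_{n-2}$ from Lemma~\ref{lem-IP}(i), the Binet form from Lemma~\ref{lem-BF} collapsing via $f_1-\la_2=\la_1^2/a$ and $\la_1-f_1=-\la_2^2/a$ to $f_n=\frac{\la_1^{n+2}-\la_2^{n+2}}{a(\la_1-\la_2)}$, and the parity-split factorization via Lemma~\ref{lem-decomp}. Your extra care about the degenerate case $a^2+4b=0$ (handled by restricting to $x>0$ and using that the identity is polynomial) is a small refinement the paper leaves implicit.
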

\begin{proof}
Let $f_n=I(\pngu;x)$. Denote $a=I(G-v;x)$ and $b=xI(G-N[v];x)$. Then
$f_0=1,f_1=a+b$ and
\begin{equation}\label{rr-f}
f_n=af_{n-1}+abf_{n-2}
\end{equation}
by Lemma~\ref{lem-IP}~(i). Solve this recursion by
Lemma~\ref{lem-BF} to obtain
\begin{equation}\label{BF-f}
f_n=\frac{(a+b-\la_2)\la_1^n+(\la_1-a-b)\la_2^n}{\la_1-\la_2},
\end{equation}
where $\la_1$ and $\la_2$ are the roots of the quadratic equation
$\la^2-a\la-ab=0$. Note that $a+b-\la_2=\la_1+b=\la_1^2/a$ and
$\la_1-a-b=-\la_2^2/a$. Hence
$$f_n=\frac{{\la_1}^{n+2}-{\la_2}^{n+2}}{a(\la_1-\la_2)}.$$
Thus we have by Lemma~\ref{lem-decomp}
\begin{equation}\label{odd-f}
    f_n =
    \frac{(\la_1-\la_2)\prod\limits_{s=1}^{\frac{n+1}{2}}\left[(\la_1+\la_2)^2-4\la_1\la_2\cos^2\frac{s\pi}{n+2}\right]}{a(\la_1-\la_2)}
       =a^{\frac{n-1}{2}}\prod_{s=1}^{\frac{n+1}{2}}\left(a+4b\cos^2{\frac{s\pi}{n+2}}\right)
\end{equation}
for odd $n$, and
\begin{equation}\label{even-f}
    f_n=
    \prod\limits_{s=1}^{\frac{n}{2}}\left[(\la_1+\la_2)^2-4\la_1\la_2\cos^2\frac{s\pi}{n+2}\right]
   = a^{\frac{n}{2}}\prod_{s=1}^{\frac{n}{2}}\left(a+4b\cos^2{\frac{s\pi}{n+2}}\right)
\end{equation}
for even $n$. Combine (\ref{odd-f}) and (\ref{even-f}) to obtain
(\ref{exp-PGu}).
\end{proof}

\begin{rem}\label{rem-p}
If $G$ is the singleton graph with the unique vertex $v$, then
$\pngu$ is just the path $P_n$. Thus (\ref{exp-path}) is a
special case of (\ref{exp-PGu}).
\end{rem}

Theorem \ref{thm-PGu} gives the factorization of the independence
polynomial of $\pngu$. In what follows we present some applications
of this result.
\subsubsection{Vertebrated Graphs and Firecracker Graphs}
\hspace*{\parindent}
Let $G$ be the star graph $K_{1,m}$ with the center $v$. Then
$\pngu$ is the vertebrated graph $V_{n}^{(m)}$. Levit and
Mandrescu~\cite{LM02} showed that the independence polynomial of the
$n$-centipede $V_{n}^{(1)}$ is unimodal and further conjectured that
$I(V_{n}^{(1)};x)$ has only real zeros. Zhu~\cite{Zhu07} settled the
conjecture and showed that the independence polynomial of the
caterpillar graph $V_{n}^{(2)}$ is symmetric and unimodal. She also
proposed the following.
\begin{conj}[\cite{Zhu07}]\label{conj-zhu}
For $n,m\ge 0$, the independence polynomial $I(V_{n}^{(m)};x)$ is
unimodal.
\end{conj}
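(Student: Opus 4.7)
The plan is to invoke Theorem~\ref{thm-PGu} with $G=K_{1,m}$ and $v$ the central vertex. Since $K_{1,m}-v$ is the edgeless graph on $m$ vertices and $K_{1,m}-N[v]$ is the empty graph, one has $I(K_{1,m}-v;x)=(1+x)^m$ and $I(K_{1,m}-N[v];x)=1$. Theorem~\ref{thm-PGu} therefore gives the factorization
\begin{equation*}
I(V_n^{(m)};x)=(1+x)^{m\lrf{n/2}}\prod_{s=1}^{\lrf{(n+1)/2}}\left[(1+x)^m+4x\cos^2\frac{s\pi}{n+2}\right].
\end{equation*}
Every factor has positive coefficients, so by Lemma~\ref{product}(ii) it suffices to show that each individual factor is log-concave; log-concavity of the whole product then immediately gives unimodality.

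The prefactor $(1+x)^{m\lrf{n/2}}$ is log-concave by the classical log-concavity of binomial coefficients. Each remaining factor has the form $p_c(x):=(1+x)^m+cx$ with $c=4\cos^2\frac{s\pi}{n+2}\in(0,4)$. To verify that $p_c$ is log-concave, write $p_c(x)=\sum_{k=0}^m a_kx^k$ and observe that $a_k=\binom{m}{k}$ for $k\ne 1$, while $a_1=m+c$. Hence the inequality $a_k^2\ge a_{k-1}a_{k+1}$ coincides with classical binomial log-concavity for all $k\ge 3$, and at $k=1$ the inequality $(m+c)^2\ge\binom{m}{2}$ is trivial. The only new step is the index $k=2$: after clearing denominators, $\binom{m}{2}^2\ge(m+c)\binom{m}{3}$ reduces (for $m\ge 3$) to $m^2-(2c-1)m+4c\ge 0$. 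Viewed as a linear function of $c$, its minimum on $[0,4]$ is attained at $c=4$, giving $m^2-7m+16$, whose discriminant $49-64$ is negative; so the bound holds. For $m\le 2$ the inequality is automatic because $\binom{m}{3}=0$.

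Combining these observations, every factor in the factorization above is log-concave with positive coefficients, so iterating Lemma~\ref{product}(ii) yields that $I(V_n^{(m)};x)$ is log-concave, hence unimodal, settling Conjecture~\ref{conj-zhu}. The main obstacle is the $k=2$ log-concavity check for the perturbed binomial $p_c(x)$; once this short polynomial inequality is in hand, the rest of the argument is routine. I would also remark in passing that for $m\le 2$ each factor is in fact of degree at most $2$ with nonnegative discriminant, so one actually recovers the stronger reality-of-zeros statement for $V_n^{(0)}$, $V_n^{(1)}$, $V_n^{(2)}$ already known in the literature.
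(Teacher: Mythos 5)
Your proposal is correct and follows essentially the same route as the paper: specialize Theorem~\ref{thm-PGu} to $G=K_{1,m}$ glued at the center to get the factorization of $I(V_n^{(m)};x)$, then verify log-concavity of each factor $(1+x)^m+4ax$, which reduces to the same inequality $m^2-7m+16\ge 0$ at the extreme value of the cosine term. Your explicit check of the $k=1$ coefficient inequality and the closing remark on real zeros for $m\le 2$ are minor refinements of the paper's Proposition~\ref{vg}, not a different argument.
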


We next give a general result about the unimodality of the
independence polynomials of the vertebrated graphs, which in
particular, gives an affirmative answer to
Conjecture~\ref{conj-zhu}.
\begin{prop}\label{vg}
Let $n\ge 1$ and $m\ge 0$. Then
\begin{itemize}
\item [\rm (i)]
the independence polynomial of the vertebrated graph is
\begin{equation}\label{exp-v}
I(V_{n}^{(m)};x)=(1+x)^{m\lrf{\frac{n}{2}}}
\prod_{s=1}^{\lrf{\frac{n+1}{2}}}\left[(1+x)^{m}+4x\cos^2\frac{s\pi}{n+2}\right].
\end{equation}
\item [\rm (ii)]
$I(V_{n}^{(m)};x)$ is log-concave and therefore unimodal. In
particular, $I(V_{n}^{(m)};x)$ has only real zeros for $m=0,1,2$.
\end{itemize}
\end{prop}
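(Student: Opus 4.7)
Part (i) should be a one-line reduction to Theorem~\ref{thm-PGu}. Taking $G = K_{1,m}$ with $v$ its center, $G - v$ is the edgeless graph on $m$ vertices and so $I(G-v;x) = (1+x)^m$; also $N[v] = V(G)$ forces $G - N[v]$ to be empty, giving $I(G-N[v];x) = 1$. Substituting these two identities into (\ref{exp-PGu}) immediately produces (\ref{exp-v}).

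For (ii), the plan is to establish log-concavity of each factor appearing on the right of (\ref{exp-v}) and then assemble the pieces via Lemma~\ref{product}(ii). The prefactor $(1+x)^{m\lrf{n/2}}$ is log-concave because all of its zeros equal $-1$. The real work concerns each remaining factor
$$g_s(x) := (1+x)^m + c_s x, \qquad c_s := 4\cos^2\frac{s\pi}{n+2} \in [0,4].$$
Its coefficient sequence $b_0,b_1,\ldots,b_m$ coincides with that of $(1+x)^m$ at every position except the linear term, which is enlarged from $m$ to $m + c_s$. Hence the classical log-concavity of the row $\binom{m}{k}$ takes care of every inequality $b_k^2 \ge b_{k-1}b_{k+1}$ except the one at $k = 2$, where the perturbed $b_1$ first shows up on the right-hand side. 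I expect this single inequality to be the crux of the whole proof: unfolding it reduces to the scalar condition $c_s \le m(m+1)/[2(m-2)]$ for $m \ge 3$, and a brief minimization in $m$ will show the right-hand side is at least $4$, matching the uniform bound $c_s \le 4$ with a little room to spare. The cases $m \le 2$ are trivial because then $\binom{m}{3} = 0$. With log-concavity of every factor in hand, Lemma~\ref{product}(ii) will yield the log-concavity (and hence unimodality) of $I(V_n^{(m)};x)$.

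Finally, the real-zeros claim for $m \in \{0,1,2\}$ should fall out of the same factorization because each $g_s$ then has degree at most $2$: for $m \le 1$ it is linear, and for $m = 2$ it is the quadratic $1 + (2+c_s)x + x^2$ whose discriminant $(2+c_s)^2 - 4 = c_s(c_s + 4)$ is manifestly nonnegative. Consequently every factor splits over $\mathbb{R}$, and Lemma~\ref{product}(i) delivers real-rootedness of the full product.
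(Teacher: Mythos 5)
Your proposal is correct and follows essentially the same route as the paper: part (i) by direct substitution into Theorem~\ref{thm-PGu}, and part (ii) by showing each factor $(1+x)^m+c_sx$ is log-concave, which reduces to the single coefficient inequality at $k=2$ (your condition $c_s\le m(m+1)/[2(m-2)]$ is exactly the paper's $m^2-7m+16\ge 0$), followed by Lemma~\ref{product} and the same degree-$\le 2$ discriminant check for $m\le 2$.
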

\begin{proof}
(i)\quad Note that independence polynomials of the empty graph on
$m$ vertices and the null graph are $(1+x)^m$ and $1$ respectively.
Hence (\ref{exp-v}) follows immediately from (\ref{exp-PGu}).

(ii)\quad To show the log-concavity of $I(V_{n}^{(m)};x)$, it
suffices to show that each factor on the right of (\ref{exp-v}) is
log-concave by Lemma~\ref{product}~(ii).

Let $0\le a\le 1$ and $m\ge 0$. We claim that the polynomial
$$(1+x)^{m}+4xa=1+(m+4a)x+\frac{m(m-1)}{2}x^2+\frac{m(m-1)(m-2)}{6}x^3+\cdots+x^m$$
is log-concave. Actually, it suffices to prove the inequality
$$\left[\frac{m(m-1)}{2}\right]^2\ge (m+4a)\left[\frac{m(m-1)(m-2)}{6}\right]$$
since $(1+x)^{m}$ is log-concave. Clearly, it suffices to prove the
inequality when $a=1$.  In this case, the inequality is equivalent
to $m^2-7m+16\ge 0$, which is obviously true.

Thus $I(V_{n}^{(m)};x)$ is log-concave and therefore unimodal for
$m\ge 0$.

It is obvious from (\ref{exp-v}) that $I(V_{n}^{(m)};x)$ has only
real zeros for $m=0,1$. It is also easy to confirm that
$(x+1)^2+4ax=x^2+2(1+2a)x+1$ has only real zeros for $a\ge 0$. Hence
$I(V_{n}^{(2)};x)$ has only real zeros. This completes the proof of
(ii).
\end{proof}

Similarly, applying Theorem \ref{thm-PGu} to the firecracker graph,
the following result is immediate.
\begin{prop}\label{fg}
Let $n\ge 1$ and $m\ge 0$. Then
\begin{itemize}
\item [\rm (i)] $I(F_{n}^{(m)};x)=\left[(x+1)^{m}+x\right]^{\lrf{\frac{n}{2}}}
\prod_{s=1}^{\lrf{\frac{n+1}{2}}}\left[(x+1)^{m}+x+4x(x+1)^{m}\cos^2\frac{s\pi}{n+2}\right]$;
\item [\rm (ii)]
$I(F_{n}^{(m)};x)$ is log-concave and unimodal.
\end{itemize}
\end{prop}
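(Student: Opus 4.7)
The plan is to derive (i) as a direct instance of Theorem~\ref{thm-PGu} and then reduce (ii) to verifying log-concavity of each factor in the resulting product via Lemma~\ref{product}(ii).

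For (i), write $F_n^{(m)} = \pngu$ with $G = K_{1,m+1}$ and $v$ a leaf (the star having $m+1$ leaves, so each ``firework'' below a path vertex contains a center plus $m$ remaining leaves). Then $G - v \cong K_{1,m}$ has independence polynomial $(1+x)^m + x$, while $G - N[v]$ consists of the $m$ leaves other than $v$ (an edgeless graph) with independence polynomial $(1+x)^m$. Substituting these two expressions into~(\ref{exp-PGu}) produces exactly the formula claimed in (i).

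For (ii), by Lemma~\ref{product}(ii) it suffices to verify that each factor appearing in the product in (i) is log-concave. Two types of factor occur: the base factor $\phi(x) = (1+x)^m + x$, raised to the power $\lrf{n/2}$, and the parametric factors $\psi_a(x) = (1+x)^m(1 + 4ax) + x$ where $a = \cos^2(s\pi/(n+2)) \in [0,1]$. Both are treated by the same device: $\phi$ arises from $(1+x)^m$ by adding $x$, and $\psi_a$ arises in the same way from the polynomial $(1+x)^m(1+4ax)$, which is itself log-concave as a product of two log-concave polynomials by Lemma~\ref{product}(ii). Since this modification alters only the coefficient of $x$, it can spoil the Newton inequality $p_k^2 \ge p_{k-1}p_{k+1}$ only at $k=1$, where it in fact becomes strictly easier as $p_1$ grows, and at $k=2$. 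Thus the task reduces to checking the $k=2$ inequality for each factor.

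For $\phi$ the $k=2$ inequality is $\binom{m}{2}^2 \ge (m+1)\binom{m}{3}$, equivalent to $m^2 - m + 4 \ge 0$, which is immediate. The corresponding check for $\psi_a$---namely $\left[\binom{m}{2}+4am\right]^2 \ge (m+4a+1)\left[\binom{m}{3}+4a\binom{m}{2}\right]$---is the main technical obstacle. It is trivial for $m \le 1$ since $\psi_a$ then has degree $\le 2$; for $m \ge 2$, clearing denominators, factoring out the common $m$, and grouping by powers of $a$ recasts it as
$$96(m+1)\,a^2 + 8(2m-1)(m-1)\,a + (m^3 - 2m^2 + 5m - 4) \ge 0,$$
whose three coefficients are each non-negative for $m \ge 1$ (the constant term vanishes at $m=1$ and is increasing thereafter, by inspection of the derivative $3m^2-4m+5$). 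The inequality therefore holds for all $a \ge 0$, in particular on $[0,1]$. Log-concavity of every factor follows, whence log-concavity, and hence unimodality, of $I(F_n^{(m)};x)$ by Lemma~\ref{product}(ii).
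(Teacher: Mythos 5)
Your proof is correct and takes the route the paper intends: part (i) is a direct substitution into Theorem~\ref{thm-PGu}, and part (ii) is the factor-by-factor log-concavity argument via Lemma~\ref{product}(ii), exactly parallel to the proof of Proposition~\ref{vg}(ii); the paper merely declares the result ``immediate'' and omits all of the details you supply. Your reduction of each factor to the single Newton-type inequality at $k=2$ (adding $x$ only raises the coefficient of $x$, which helps at $k=1$ and can hurt only $p_2^2\ge p_1p_3$) is sound, and the algebra checks out: for the factor $(1+x)^m(1+4ax)+x$ one indeed gets
\begin{equation*}
p_2^2-p_1p_3=\frac{m}{12}\Bigl[96(m+1)a^2+8(2m-1)(m-1)a+\bigl(m^3-2m^2+5m-4\bigr)\Bigr],
\end{equation*}
with all three coefficients nonnegative for $m\ge 1$, as you claim.

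One point you should flag rather than gloss over: you instantiate Theorem~\ref{thm-PGu} with $G=K_{1,m+1}$ glued at a leaf, whereas the paper (Section 3.1 and Figure~1) defines $F_n^{(m)}$ as the $n$-concatenation of $K_{1,m}$ on a leaf. Under that literal definition $I(G-v;x)=(1+x)^{m-1}+x$ and $I(G-N[v];x)=(1+x)^{m-1}$, so the factorization would carry exponent $m-1$ throughout; for instance, at $n=1$ the displayed formula in (i) evaluates to $(1+x)^{m+1}+x=I(K_{1,m+1};x)$ rather than $I(K_{1,m};x)$. Thus the stated identity is correct only under your convention --- this is an off-by-one inconsistency internal to the paper, but in a self-contained write-up you should state explicitly which firecracker graph the formula refers to instead of silently choosing the star that makes the formula come out.
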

\subsubsection{Claw-free Graphs}
\hspace*{\parindent}
Recently, Chudnovsky and Seymour~\cite{CS07} showed that the
independence polynomial of a claw-free graph has only real zeros. We
say that two real polynomials $f(x)$ and $g(x)$ are {\it compatible}
if $af(x)+bg(x)$ has only real zeros for all $a,b\ge 0$. Chudnovsky
and Seymour in fact obtained the following result.
\begin{lem}[\cite{CS07}]\label{comp}
Let $G$ be a claw-free graph. Then
\begin{itemize}
\item [\rm (i)]
$I(G-v;x)$ and $xI(G-N[v];x)$ are compatible for any $v\in V(G)$;
\item [\rm (ii)]
$I(G;x)$ has only real zeros.
\end{itemize}
\end{lem}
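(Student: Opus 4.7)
I would first observe that (ii) is an immediate consequence of (i): Lemma~\ref{lem-IP}(i) gives the identity $I(G;x)=I(G-v;x)+x\,I(G-N[v];x)$, so once the two summands on the right are compatible, the definition of compatibility applied with $a=b=1$ yields the real-rootedness of $I(G;x)$. The entire content of the lemma therefore lies in (i).

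For (i), I would proceed by strong induction on $|V(G)|$, the base case $|V(G)|\le 1$ being trivial since both polynomials are then constants or zero. The key analytic tool is the classical Obreschkoff--Dedieu criterion: two polynomials $f,g$ with nonnegative coefficients are compatible if and only if they admit a \emph{common interlacer}, namely a real-rooted polynomial whose zeros separate the zeros of $f$ and of $g$. Equivalently, one needs to verify that every combination $\alpha\, I(G-v;x)+\beta\, x\,I(G-N[v];x)$ with $\alpha,\beta\ge 0$ has only real zeros. Both $G-v$ and $G-N[v]$ are induced subgraphs of $G$, hence claw-free, so the induction hypothesis supplies real-rootedness of each summand individually together with every compatibility statement among independence polynomials of their proper induced subgraphs.

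To produce the required common interlacer I would pick a vertex $u\in N(v)$ and expand both $I(G-v;x)$ and $I(G-N[v];x)$ using Lemma~\ref{lem-IP}(i) at $u$, reducing the problem to compatibilities among the polynomials $I(G-v-u;x)$, $x\,I(G-v-N[u];x)$, $I(G-N[v]-u;x)$ and $x\,I(G-N[v]-N[u];x)$, all of which belong to claw-free graphs with strictly fewer vertices than $G$. Claw-freeness is essential here because it constrains $N(v)$ to be the union of at most two cliques, and this structural constraint is what ties together the "$G-v$" and "$G-N[v]$" expansions so that an interlacer for the smaller pairs propagates to an interlacer for the original pair.

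The main obstacle, which is the genuine substance of the Chudnovsky--Seymour argument, is that a naive induction on the statement of (i) alone does not close: one must \emph{strengthen} the inductive hypothesis to assert pairwise compatibility among an entire family of independence polynomials associated with vertex subsets of any claw-free graph (for example, all polynomials of the form $x^{|S|}I(G-N[S];x)$ as $S$ ranges over certain stable sets, suitably indexed). I would formulate such a strengthened family explicitly so that the reduction in the previous paragraph stays inside the family, and then carry out the inductive step by a case analysis on the adjacency pattern inside $N(v)$ (using that $N(v)$ is the union of at most two cliques) to verify the required pairwise compatibilities. Once the strengthened statement is closed under the induction, part (i) follows by specializing the family to the pair $\{I(G-v;x),\,x\,I(G-N[v];x)\}$, and part (ii) follows as noted at the start.
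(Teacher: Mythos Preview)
The paper does not prove this lemma; it is quoted from Chudnovsky and Seymour~\cite{CS07} and used as a black box in the proof of Proposition~\ref{cl}. There is therefore no proof in the paper to compare your proposal against.

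Regarding your sketch on its own merits: the overall architecture is that of the Chudnovsky--Seymour argument---(ii) does follow from (i) via Lemma~\ref{lem-IP}(i), and the substance is an induction whose hypothesis must be strengthened to a pairwise-compatibility statement for a whole family of independence polynomials, which you correctly flag as the crux. Two concrete issues remain. First, your structural claim that claw-freeness forces $N(v)$ to be a union of at most two cliques is false: claw-freeness only gives $\alpha(G[N(v)])\le 2$, i.e., the complement of $G[N(v)]$ is triangle-free, not necessarily bipartite. The wheel $W_5$ (a $5$-cycle plus a universal vertex $v$) is claw-free, yet $G[N(v)]\cong C_5$ is not a union of two cliques, so any case analysis built on that premise would be incomplete. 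Second, and more seriously, you leave the strengthened inductive family unspecified (``for example, all polynomials of the form $x^{|S|}I(G-N[S];x)$ \ldots\ suitably indexed''); finding a family that is both large enough to close the induction and small enough that pairwise compatibility can actually be verified is precisely the nontrivial content of~\cite{CS07}, and your proposal does not supply it.
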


We can extend this result as follows.
\begin{prop}\label{cl}
If $G$ is a claw-free graph, then for any vertex $v$ of $G$, the
independence polynomial $I(\pngu;x)$ has only real zeros.
\end{prop}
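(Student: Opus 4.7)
The plan is to start from the explicit factorization of $I(\pngu;x)$ given by Theorem~\ref{thm-PGu},
$$
I(\pngu;x)=I^{\lrf{n/2}}(G-v;x)\prod_{s=1}^{\lrf{(n+1)/2}}\Bigl[I(G-v;x)+4xI(G-N[v];x)\cos^2\tfrac{s\pi}{n+2}\Bigr],
$$
and to show that every factor on the right-hand side has only real zeros. The conclusion then follows from Lemma~\ref{product}~(i) applied iteratively, since each factor has positive coefficients (independence polynomials do, and nonnegative linear combinations of polynomials with positive coefficients again have positive coefficients).

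First I would dispose of the factor $I^{\lrf{n/2}}(G-v;x)$. Since the class of claw-free graphs is closed under vertex deletion, $G-v$ is itself claw-free, so Lemma~\ref{comp}~(ii) gives that $I(G-v;x)$ has only real zeros, and then so does any nonnegative integer power of it.

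The main step is to handle the remaining $\lrf{(n+1)/2}$ factors. Here the crucial ingredient is the \emph{stronger} part of Chudnovsky--Seymour's result, namely Lemma~\ref{comp}~(i): it is not merely that $I(G;x)$ is real-rooted, but that the pair $I(G-v;x)$ and $xI(G-N[v];x)$ is compatible whenever $G$ is claw-free. Since $4\cos^{2}\frac{s\pi}{n+2}\ge 0$ for every $s$, taking $a=1$ and $b=4\cos^{2}\frac{s\pi}{n+2}$ in the definition of compatibility shows at once that
$$
I(G-v;x)+4xI(G-N[v];x)\cos^{2}\tfrac{s\pi}{n+2}
$$
has only real zeros for each $s\in\{1,\dots,\lrf{(n+1)/2}\}$.

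There is no genuine obstacle in the argument; the only subtlety worth emphasizing is that Lemma~\ref{comp}~(ii) by itself would not be enough. Without the compatibility strengthening in Lemma~\ref{comp}~(i), one would have no control over the nonnegative linear combinations of $I(G-v;x)$ and $xI(G-N[v];x)$ appearing in~(\ref{exp-PGu}), and this is precisely the structural information that the factorization converts into real-rootedness of $I(\pngu;x)$ after combining the factors via Lemma~\ref{product}~(i).
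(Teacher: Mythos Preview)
Your proof is correct and follows essentially the same route as the paper's: invoke the factorization of Theorem~\ref{thm-PGu}, use that $G-v$ is claw-free to handle the power $I(G-v;x)^{\lfloor n/2\rfloor}$, and then apply the compatibility statement of Lemma~\ref{comp}~(i) to conclude that each remaining factor is real-rooted. The paper's version is slightly terser but the argument is identical.
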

\begin{proof}
Recall that $$I(\pngu;x)=I^{\lfloor\frac{n}{2}\rfloor}(G-v;x)
\prod_{s=1}^{\lfloor\frac{n+1}{2}\rfloor}
\left[I(G-v;x)+4xI(G-N[v];x)\cos^2\frac{s\pi}{n+2}\right].$$ By the
assumption, the graph $G$ is claw-free, so are the induced graphs
$G-v$ and $G-N[v]$. By Lemma~\ref{comp}, $I(G-v;x)$ and
$xI(G-N[v];x)$ have only real zeros and are compatible. Hence
$I(G-v;x)+4xI(G-N[v];x)\cos^2\frac{s\pi}{n+2}$ has only real zeros
for every $s$. Thus the product $I(\pngu;x)$ has only real zeros.
\end{proof}
\begin{rem}
Although the graphs $V_{n}^{(1)}$ and $V_{n}^{(2)}$ are not
claw-free for $n\ge 2$, their independence polynomials have only
real zeros from Proposition~\ref{cl}. As a contrast,
$I(V_{n}^{(m)};x)$ is merely log-concave and unimodal in general for
$m\ge 3$.
\end{rem}
\subsubsection{Concatenation of Graphs in a Circle} \hspace*{\parindent}
Let $C_n$ denote the cycle with $n$ vertices. Let $G$ be a
graph and $v$ a vertex of $G$. Let $\cngu$ denote the graph obtained
by gluing each vertex of $C_n$ with the vertex $v$ of $n$ copies $G$
respectively. We have a similar result to Theorem~\ref{thm-PGu}.
\begin{prop}\label{prop-CGu}
Let $\cngu$ be defined as above. Then for $n\ge 3$,
$$I(\cngu;x)=I^{\lrf{\frac{n+1}{2}}}(G-v;x)
\prod_{s=1}^{\lrf{\frac{n}{2}}}
\left[I(G-v;x)+4xI(G-N[v];x)\cos^2\frac{(2s-1)\pi}{2n}\right].$$
\end{prop}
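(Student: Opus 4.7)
The plan is to mimic the proof of Theorem~\ref{thm-PGu} by reducing the circular case to the already–solved linear case. Write $a=I(G-v;x)$, $b=xI(G-N[v];x)$, and let $f_n=I(\pngu;x)$ so that from the proof of Theorem~\ref{thm-PGu} we have the closed form
\[
f_n=\frac{\lambda_1^{n+2}-\lambda_2^{n+2}}{a(\lambda_1-\lambda_2)},\qquad \lambda_1\lambda_2=-ab,\ \lambda_1+\lambda_2=a,
\]
where $\lambda_1,\lambda_2$ are the roots of $\lambda^2-a\lambda-ab=0$. Let $g_n=I(\cngu;x)$ and label the cycle vertices $v_1,\dots,v_n$. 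Applying Lemma~\ref{lem-IP}(i) at $v_1$ and reading off the components: $\cngu-v_1$ splits as the disjoint union of one isolated copy of $G-v$ together with $G^{-}_{n-1}(v)$ on the remaining cycle vertices (which now form a path), while $\cngu-N[v_1]$ contains a copy of $G-N[v]$, two copies of $G-v$ (at $v_2$ and $v_n$), and $G^{-}_{n-3}(v)$ on $v_3,\dots,v_{n-1}$. Therefore
\[
g_n = a\, f_{n-1} + a^2 b\, f_{n-3}.
\]

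The next step is pure algebra. Substituting the closed form of $f_n$ and using $ab=-\lambda_1\lambda_2$, the terms $ab(\lambda_1^{n-1}-\lambda_2^{n-1})$ collapse to $\lambda_1\lambda_2^n-\lambda_2\lambda_1^n$, and after combining with $\lambda_1^{n+1}-\lambda_2^{n+1}$ everything telescopes to
\[
g_n = \lambda_1^n+\lambda_2^n.
\]
This is precisely the quantity decomposed by Lemma~\ref{lem-decomp}(iii) and (iv). Feeding in $\lambda_1+\lambda_2=a$ and $-4\lambda_1\lambda_2=4ab$, each bracket on the right of those identities becomes $a^2+4ab\cos^2\frac{(2s-1)\pi}{2n}=a\bigl[a+4b\cos^2\frac{(2s-1)\pi}{2n}\bigr]$. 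Tallying the leftover factors of $a$ in the odd and even cases gives total exponent $\lfloor(n+1)/2\rfloor$ for the product $I^{\lfloor(n+1)/2\rfloor}(G-v;x)$ and $\lfloor n/2\rfloor$ factors of the bracketed form, matching the stated identity exactly.

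The only real obstacle is the bookkeeping in the deletion step — making sure I account correctly for the two extra copies of $G-v$ produced at $v_2$ and $v_n$, and that the formula remains valid in the boundary case $n=3$ where $G^{-}_{0}(v)$ must be interpreted as the null graph with $I\equiv 1$. Once that decomposition is pinned down, the remaining algebra is the same resultant-style manipulation used in Theorem~\ref{thm-PGu}; no new analytic input beyond Lemmas~\ref{lem-BF} and~\ref{lem-decomp} is needed.
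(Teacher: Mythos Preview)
Your proof is correct and follows essentially the same route as the paper: derive $g_n=af_{n-1}+a^2bf_{n-3}$ from Lemma~\ref{lem-IP}(i), substitute the closed form for $f_n$ to obtain $g_n=\lambda_1^n+\lambda_2^n$, and factor via Lemma~\ref{lem-decomp}(iii),(iv). You spell out the deletion bookkeeping at $v_1$ (including the boundary case $n=3$) more explicitly than the paper does, but the argument is otherwise identical.
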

\begin{proof}
Let $f_n$ and $g_n$ be the independence polynomials of $\pngu$ and
$\cngu$ respectively. Then by Lemma~\ref{lem-IP}~(i),
\begin{equation}\label{f-g}
g_n=af_{n-1}+a^2bf_{n-3},\quad n=3,4,\ldots,
\end{equation}
where $a=I(G-v;x)$ and $b=xI(G-N[v];x)$. Recall that
$$f_n=\frac{{\la_1}^{n+2}-{\la_2}^{n+2}}{a(\la_1-\la_2)},$$
where $\la_1$ and $\la_2$ are the roots of the equation
$\la^2-a\la-ab=0$. Hence
\begin{eqnarray*}
g_n &=& a\frac{\la_1^{n+1}-\la_2^{n+1}}{a(\la_1-\la_2)}+a^2b\frac{\la_1^{n-1}-\la_2^{n-1}}{a(\la_1-\la_2)}\\
&=& \frac{\la_1^{n+1}-\la_2^{n+1}}{\la_1-\la_2}-\la_1\la_2\frac{\la_1^{n-1}-\la_2^{n-1}}{\la_1-\la_2}\\
&=& \la_1^n+\la_2^n\\
&=& a^{\lrf{\frac{n+1}{2}}}\prod_{s=1}^{\lrf{\frac{n}{2}}}
\left[a+4b\cos^2\frac{(2s-1)\pi}{2n}\right].
\end{eqnarray*}
by Lemma \ref{lem-decomp} (iii) and (iv). Thus the statement
follows.
\end{proof}
\begin{rem}
The sequence $f_n$ satisfies the linear recurrence relation
(\ref{rr-f}), so by (\ref{f-g}) the sequence $g_n$ satisfies the
same recursion:
$$g_n=ag_{n-1}+abg_{n-2}, \quad n=5,6,\ldots.$$
\end{rem}
\begin{rem}
If $G$ is the singleton graph with the unique vertex $v$, then
$\cngu$ is merely the cycle $C_n$. Thus the independence
polynomial is
$$I(C_n;x)=\prod_{s=1}^{\lrf{\frac{n}{2}}}
\left[1+4x\cos^2\frac{(2s-1)\pi}{2n}\right].$$
\end{rem}
\begin{rem}
There are similar results to Propositions~\ref{vg},
\ref{fg}~and~\ref{cl} for $\cngu$.
\end{rem}
\subsection{A Conjecture of Levit and Mandrescu}
\hspace*{\parindent}
In \cite{LM07}, Levit and Mandrescu constructed a family of graphs
$H_n$ from the path $P_n$ with $n$ vertices by the so-called ``clique cover of a
graph'' rule, as shown in Figure 2. By $H_0$ we mean the null graph.
\begin{center}
\setlength{\unitlength}{1cm}
\begin{picture}(20,2.5)(-1.5,-1.5)

\thicklines \put(0,0){\line(1,0){3.15}} \thicklines
\put(3.8,0){\line(1,0){2.2}}
\put(0,0){\circle*{0.2}}\put(1,0){\circle*{0.2}}
\put(2,0){\circle*{0.2}}\put(3,0){\circle*{0.2}}
\put(3.3,0){\circle*{0.1}}\put(3.5,0){\circle*{0.1}}
\put(3.7,0){\circle*{0.1}}
 \put(4,0){\circle*{0.2}}
\put(5,0){\circle*{0.2}}\put(6,0){\circle*{0.2}}
 \put(0,0){\line(0,1){1}} \put(0,0){\line(0,-1){1}}
 \put(1,0){\line(-1,-1){1}} \put(1,0){\line(-1,1){1}}
 \put(2,0){\line(0,-1){1}}\put(2,0){\line(0,1){1}}
 \put(3,0){\line(-1,-1){1}}\put(3,0){\line(-1,1){1}}
 \put(4,0){\line(0,-1){1}}\put(4,0){\line(0,1){1}}
 \put(5,0){\line(-1,-1){1}}\put(5,0){\line(-1,1){1}}
 \put(6,0){\line(0,-1){1}}\put(6,0){\line(0,1){1}}
 \put(0,-1){\circle*{0.2}}\put(0,1){\circle*{0.2}}
 \put(2,-1){\circle*{0.2}}\put(2,1){\circle*{0.2}}
\put(4,-1){\circle*{0.2}}\put(4,1){\circle*{0.2}}
\put(6,-1){\circle*{0.2}}\put(6,1){\circle*{0.2}}

\put(0.1,0.1){$v_{1}$}\put(1.5,-0.3){$e$}
\put(1.1,0.1){$v_{2}$}\put(2.1,0.1){$v_{3}$}
\put(5.1,0.1){$v_{2m}$}\put(6.1,0.1){$v_{2m+1}$}
\put(2,-1.8){$H_{2m+1}$}

\thicklines \put(8,0){\line(1,0){3.15}} \thicklines
\put(11.8,0){\line(1,0){1.2}}
\put(8,0){\circle*{0.2}} \put(9,0){\circle*{0.2}}
\put(10,0){\circle*{0.2}}\put(11,0){\circle*{0.2}}
\put(11.3,0){\circle*{0.1}}\put(11.5,0){\circle*{0.1}}
\put(11.7,0){\circle*{0.1}}
\put(12,0){\circle*{0.2}}\put(13,0){\circle*{0.2}}

\put(8,0){\line(1,-1){1}}\put(8,0){\line(1,1){1}}
 \put(9,0){\line(0,-1){1}}\put(9,0){\line(0,1){1}}
 \put(10,0){\line(1,-1){1}}\put(10,0){\line(1,1){1}}
 \put(11,0){\line(0,-1){1}}\put(11,0){\line(0,1){1}}
 \put(12,0){\line(1,-1){1}}\put(12,0){\line(1,1){1}}
 \put(13,0){\line(0,-1){1}}\put(13,0){\line(0,1){1}}

 \put(9,-1){\circle*{0.2}}\put(9,1){\circle*{0.2}}
\put(11,-1){\circle*{0.2}}\put(11,1){\circle*{0.2}}
\put(13,-1){\circle*{0.2}}\put(13,1){\circle*{0.2}}

\put(8.28,0.1){$v_1$}\put(9.1,0.1){$v_2$} \put(9.5,-0.3){$e$}
\put(10.28,0.1){$v_3$}\put(11.1,0.15){$v_4$}
\put(13.1,0.1){$v_{2m}$} \put(10,-1.8){$H_{2m}$}
\end{picture}\\[5mm]
Figure~2. The graph $H_n$.
\end{center}

Using Lemma~\ref{lem-IP} (i) Levit and Mandrescu~\cite{LM07} showed
that the independence polynomials of $H_n$ satisfy the recurrence
relation $$\left\{
  \begin{array}{ll}
     I(H_{2m};x)  =  I(H_{2m-1};x)+xI(H_{2m-2};x), & \hbox{} \\
    I(H_{2m+1};x)  =  (1+x)^2I(H_{2m};x)+xI(H_{2m-1};x), & \hbox{}
  \end{array}
\right.$$ with $I(H_0;x)=1$ and $I(H_1;x)=1+3x+x^2$. From this they
can manage to show that the independence polynomials of $H_n$ are
symmetric and unimodal. They further made the following conjecture.
\begin{conj}[{\cite{LM07}}]
\label{conj-LM} The independence polynomial of $H_n$ is log-concave
and has only real zeros for $n\ge 1$.
\end{conj}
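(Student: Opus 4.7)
The plan is to convert the author's coupled recurrences for $I(H_n;x)$ into a single second-order recursion that can be attacked by the Binet-and-decomposition machinery already applied to $\pngu$ in Section 3.1. Set $P_m=I(H_{2m};x)$ and $Q_m=I(H_{2m+1};x)$; eliminating $Q_{m-1}$ (respectively $P_m$) from the two recurrences given by Levit and Mandrescu, a short computation shows that both sequences satisfy the common second-order recurrence
$$R_m=(1+4x+x^2)R_{m-1}-x^2R_{m-2},\qquad m\ge 2,$$
with $P_0=1,\ P_1=1+4x+x^2$ and $Q_0=1+3x+x^2,\ Q_1=1+7x+13x^2+7x^3+x^4$. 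Lemma~\ref{lem-BF} applied with $a=1+4x+x^2$ and $b=-x^2$ produces characteristic roots $\la_1,\la_2$ obeying $\la_1+\la_2=1+4x+x^2$ and $\la_1\la_2=x^2$; the initial data for $P$ telescope Binet's formula to
$$P_m=\frac{\la_1^{m+1}-\la_2^{m+1}}{\la_1-\la_2}.$$
Since $\la_1\la_2\ge 0$ with $\sqrt{\la_1\la_2}=x$, Remark~\ref{divide} immediately delivers
$$P_m=\prod_{s=1}^{m}\left(1+4x+x^2-2x\cos\frac{s\pi}{m+1}\right).$$

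The odd-indexed case hinges on the identity $Q_m=P_{m+1}-xP_m$, itself a rearrangement of the first Levit--Mandrescu recurrence. Plugging in Binet's formula and exploiting $\la_i-x=\la_i-\sqrt{\la_1\la_2}=\sqrt{\la_i}\left(\sqrt{\la_1}-\sqrt{\la_2}\right)$ (valid for $x>0$, where $\la_1,\la_2>0$; the resulting identity is polynomial in $x$), the difference $P_{m+1}-xP_m$ collapses neatly to
$$Q_m=\frac{\left(\sqrt{\la_1}\right)^{2m+3}+\left(\sqrt{\la_2}\right)^{2m+3}}{\sqrt{\la_1}+\sqrt{\la_2}}.$$
Lemma~\ref{lem-decomp}~(iii) applied to $\mu_i=\sqrt{\la_i}$ with the odd exponent $n=2m+3$, together with $(\mu_1+\mu_2)^2=1+6x+x^2$ and $\mu_1\mu_2=x$, then yields
$$Q_m=\prod_{s=1}^{m+1}\left(1+6x+x^2-4x\cos^2\frac{(2s-1)\pi}{4m+6}\right).$$

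Each factor above is a quadratic of the form $x^2+cx+1$, with $c=4-2\cos\theta\in(2,6)$ in the first product and $c=6-4\cos^2\theta\in(2,6)$ in the second, so the discriminant $c^2-4$ is strictly positive: each factor has two real (distinct) zeros and positive coefficients. Lemma~\ref{product}~(i) gives real-rootedness of $I(H_n;x)$, and Newton's inequalities upgrade this to log-concavity, settling Conjecture~\ref{conj-LM}. The main obstacle is locating the correct product decomposition for $Q_m$: a direct application of Lemma~\ref{lem-decomp} to $Q_m$ via Binet's formula alone leaves irrational factors involving $\sqrt{1+6x+x^2}$, and it is only after the half-power substitution $\mu_i=\sqrt{\la_i}$ that the odd-exponent identity of Lemma~\ref{lem-decomp}~(iii) becomes applicable and produces the desired factorization with a natural number of (real-rooted) factors.
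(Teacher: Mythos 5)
Your proposal is correct and takes essentially the same route as the paper's proof of Theorem~\ref{thm-H}: the second-order recursion with $a=1+4x+x^2$, $b=-x^2$, Binet's formula via Lemma~\ref{lem-BF}, the half-power substitution $\mu_i=\sqrt{\la_i}$ (for $x>0$, then extended as a polynomial identity) so that Lemma~\ref{lem-decomp}~(iii) applies to the odd-indexed case, and real-rootedness of each quadratic factor $x^2+cx+1$ with $2<c<6$ transferred to the product by Lemma~\ref{product}~(i) and Newton's inequalities; your factorizations agree with (\ref{exp-H}) after rewriting $1+6x+x^2-4x\cos^2\theta=1+4x+x^2-2x\cos 2\theta$ and reindexing the cosines. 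The only cosmetic differences are that you obtain the recursion by eliminating from the coupled Levit--Mandrescu recurrences rather than from Lemma~\ref{lem-IP}~(ii), and you reach the odd-indexed closed form via $Q_m=P_{m+1}-xP_m$ instead of applying Binet directly with the initial values $h_{-1}=1$, $h_1=1+3x+x^2$.
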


We now use our approach to give the factorization of $I(H_n;x)$,
from which the result and conjecture of Levit and Mandrescu  are
clearly true.
\begin{thm}\label{thm-H}
Let $n\ge 1$. Then
\begin{itemize}
\item [\rm (i)]
the independence polynomial of $H_n$ is
\begin{equation}\label{exp-H}
I(H_{n};x)=\prod_{s=1}^{\lrf{(n+1)/2}}\left(1+4x+x^2+2x\cos\frac{2s\pi}{n+2}\right).
\end{equation}
\item [\rm (ii)]
$I(H_{n};x)$ is symmetric.
\item [\rm (iii)]
$I(H_{n};x)$ has only real zeros, and is therefore log-concave and
unimodal.
\end{itemize}
\end{thm}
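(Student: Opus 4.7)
The plan is to reduce both subsequences $P_m:=I(H_{2m};x)$ and $Q_m:=I(H_{2m+1};x)$ to a common two-term linear recurrence in $m$, and then factor using the Binet formula (Lemma~\ref{lem-BF}) together with the product expansions of Lemma~\ref{lem-decomp}. To derive the recursion, I eliminate $Q$ from the Levit--Mandrescu system: the first relation at indices $m$ and $m+1$ gives $Q_{m-1}=P_m-xP_{m-1}$ and $Q_m=P_{m+1}-xP_m$, and substituting into the second produces
\[ P_{m+1}=(1+4x+x^2)P_m-x^2P_{m-1}, \]
with initial data $P_0=1$, $P_1=1+4x+x^2$. Both sequences $(P_m)$ and $(Q_m)$ are thus determined by this single scalar recursion.

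Next set $a=1+4x+x^2$ and let $\lambda_1,\lambda_2$ be the roots of $\lambda^2-a\lambda+x^2=0$. Lemma~\ref{lem-BF} gives $P_m=(\lambda_1^{m+1}-\lambda_2^{m+1})/(\lambda_1-\lambda_2)$. Introduce $\alpha,\beta$ with $\alpha^2=\lambda_1$, $\beta^2=\lambda_2$ and sign chosen so that $\alpha\beta=x$; then $(\alpha+\beta)^2=a+2x=1+6x+x^2$ and $4\alpha\beta=4x$. The algebraic identity
\[ \alpha^{2m+4}-\beta^{2m+4}-x(\alpha^{2m+2}-\beta^{2m+2})=(\alpha-\beta)(\alpha^{2m+3}+\beta^{2m+3}), \]
which follows directly from $\alpha\beta=x$, rewrites $Q_m=P_{m+1}-xP_m$ as $(\alpha^{2m+3}+\beta^{2m+3})/(\alpha+\beta)$. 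Applying Lemma~\ref{lem-decomp}(ii) to $\alpha^{2m+2}-\beta^{2m+2}$ and Lemma~\ref{lem-decomp}(iii) to $\alpha^{2m+3}+\beta^{2m+3}$ then gives
\[ P_m=\prod_{s=1}^{m}\bigl[(1+6x+x^2)-4x\cos^2\tfrac{s\pi}{2m+2}\bigr],\qquad Q_m=\prod_{s=1}^{m+1}\bigl[(1+6x+x^2)-4x\cos^2\tfrac{(2s-1)\pi}{2(2m+3)}\bigr]. \]
The identity $4\cos^2\theta=2+2\cos2\theta$ collapses each factor to $1+4x+x^2-2x\cos\psi_s$, and the substitution $\psi_s\mapsto\pi-\psi_s$, a bijection of each index set, converts $-2x\cos\psi_s$ into $+2x\cos(2s\pi/(n+2))$ for $1\le s\le\lrf{(n+1)/2}$, merging both parities into \eqref{exp-H}.

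For parts (ii) and (iii), each factor in \eqref{exp-H} has the palindromic form $1+(4+2\cos\tfrac{2s\pi}{n+2})x+x^2$, so their product is symmetric. The discriminant of such a factor is $(4+2\cos\theta)^2-4=4(1+\cos\theta)(3+\cos\theta)$, strictly positive whenever $\theta=2s\pi/(n+2)\in(0,\pi)$, i.e.\ for $1\le s\le\lrf{(n+1)/2}$; hence each factor has two real zeros, so $I(H_n;x)$ has only real zeros, and Newton's inequalities yield log-concavity and unimodality. The main obstacle is the algebraic step in the second paragraph: only after introducing $\alpha,\beta$ with $\alpha\beta=x$ does the subtracted combination $P_{m+1}-xP_m$ collapse into a single \emph{sum} $\alpha^{2m+3}+\beta^{2m+3}$, opening the way for Lemma~\ref{lem-decomp}(iii); without that maneuver the odd case appears as a difference of two products supplied by Lemma~\ref{lem-decomp}, with no apparent clean factorization.
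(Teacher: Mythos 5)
Your proposal is correct and follows essentially the same route as the paper: the recursion $h_{n+2}=(1+4x+x^2)h_n-x^2h_{n-2}$, the Binet formula, the square-root substitution turning the odd-index case into $(\alpha^{2m+3}+\beta^{2m+3})/(\alpha+\beta)$ so that Lemma~\ref{lem-decomp} applies, the cosine reindexing $\psi\mapsto\pi-\psi$, and the factor-by-factor argument for symmetry and real zeros. The only cosmetic differences are that you obtain the recursion by eliminating the odd subsequence from the Levit--Mandrescu system rather than directly from Lemma~\ref{lem-IP}(ii), and you reach the sum-of-powers form via $Q_m=P_{m+1}-xP_m$ instead of the initial value $1+3x+x^2=\la_1+\la_2-\sqrt{\la_1\la_2}$ (and, like the paper, one should note the computation is done for $x>0$, which suffices since both sides are polynomials).
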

\begin{proof}
Let $h_n=I(H_{n};x)$. Then by Lemma~\ref{lem-IP} (ii) we obtain
\begin{equation}\label{rr-h}
h_{n+2}=(1+4x+x^2)h_{n}-x^2h_{n-2}
\end{equation}
for $n\ge 2$, where $$h_0=1,
h_1=1+3x+x^2,h_2=1+4x+x^2,h_3=1+7x+13x^2+7x^3+x^4.$$ For
convenience, we set $h_{-1}=1$, which is well-defined extension by
(\ref{rr-h}).

We next use Lemma \ref{lem-BF} to deduce the factorization of $h_n$
from the recursion (\ref{rr-h}). We may do this only for $x>0$ since
every $h_n$ is a polynomial in $x$. Let $\la_1$ and $\la_2$ be the
roots of quadratic equation
$$\la^2-(1+4x+x^2)\la+x^2=0.$$
Then $\la_1+\la_2=1+4x+x^2$ and $\la_1\la_2=x^2$.

First consider the sequence $\{h_{2m}\}$. By Lemma~\ref{lem-BF} we
have
$$h_{2m}=\frac{\left[\left(1+4x+x^2\right)-\la_2\right]\la_1^m+\left[\la_1-\left(1+4x+x^2\right)\right]\la_2^m}{\la_1-\la_2}
=\frac{\la_1^{m+1}-\la_2^{m+1}}{\la_1-\la_2}.$$ It follows from
(\ref{d-decomp}) that
\begin{equation}\label{even}
h_{2m}=\prod_{s=1}^{m}\left(1+4x+x^2+2x\cos\frac{s\pi}{m+1}\right).
\end{equation}

Then consider the sequence $\{h_{2m-1}\}$. Note that
$1+3x+x^2=(1+4x+x^2)-x=\la_1+\la_2-\sqrt{\la_1\la_2}$. Hence
\begin{eqnarray*}
  h_{2m-1} &=& \frac{\left[\left(1+3x+x^2\right)-\la_2\right]\la_1^m+\left[\la_1-\left(1+3x+x^2\right)\right]\la_2^m}{\la_1-\la_2} \\
   &=& \frac{\left(\la_1-\sqrt{\la_1\la_2}\right)\la_1^m+\left(\sqrt{\la_1\la_2}-\la_2\right)\la_2^m}{\la_1-\la_2} \\
   &=&
   \frac{\left(\sqrt{\la_1}\right)^{2m+1}+\left(\sqrt{\la_2}\right)^{2m+1}}{\sqrt{\la_1}+\sqrt{\la_2}}.
\end{eqnarray*}
It follows from Lemma~\ref{lem-decomp}~(iii) that
\begin{eqnarray}\label{odd}
h_{2m-1}&=&\prod_{s=1}^{m}\left[\left(\sqrt{\la_1}+\sqrt{\la_2}\right)^2-4\sqrt{\la_1\la_2}\cos^2\frac{(2s-1)\pi}{2(2m+1)}\right]\nonumber\\
&=&\prod_{s=1}^{m}\left(\la_1+\la_2-2\sqrt{\la_1\la_2}\cos\frac{(2s-1)\pi}{2m+1}\right)\nonumber\\
&=&\prod_{s=1}^{m}\left(1+4x+x^2-2x\cos\frac{(2s-1)\pi}{2m+1}\right)\nonumber\\
&=&\prod_{s=1}^{m}\left(1+4x+x^2+2x\cos\frac{2s\pi}{2m+1}\right).
\end{eqnarray} Combine (\ref{even}) and (\ref{odd}) to obtain
(\ref{exp-H}). This proves (i).

Clearly, every factor on the right of (\ref{exp-H})
$$1+4x+x^2+2x\cos\frac{2s\pi}{n+2}=1+2\left(2+\cos\frac{2s\pi}{n+2}\right)x+x^2$$
is symmetric and has only real zeros, so the product $I(H_n;x)$ has
the same property. Thus (ii) and (iii) follow, and the proof of the
theorem is therefore completed.
\end{proof}
\begin{rem}
Levit and Mandrescu~\cite{LM07} verified that all zeros of
$I(H_n;x)$ are located in the interval $(-6,0)$ for $n\le 20$ and
conjectured that it is true in general. Now we can prove this
conjecture by means of the factorization (\ref{exp-H}) of
$I(H_n;x)$. Indeed, every zero of $I(H_n;x)$ is obviously negative
and its absolute value
$$\left|-\left(2+\cos\frac{2s\pi}{n+2}\right)\pm\sqrt{\left(2+\cos\frac{2s\pi}{n+2}\right)^2-1}\right|<2\left(2+\cos\frac{2s\pi}{n+2}\right)\le 6.$$
\end{rem}
\section{Concluding Remarks}
\hspace*{\parindent}
In this paper we have developed techniques for attacking unimodality
problems of the independence polynomial of a graph. Roughly
speaking, one can use Lemma~\ref{lem-IP} to obtain the recurrence
relation for independence polynomials for certain class of graphs.
Solve this recursion by Lemma~\ref{lem-BF}. It is possible to give
the factorization for such polynomials by Lemma~\ref{lem-decomp}.
Then the unimodality of the product follows from that of its factors
by Lemma~\ref{product}.

The key step in this approach is to give the factorization of
polynomials $Q_n(x)$ satisfying a linear recurrence relation
$$Q_n(x)=a(x)Q_{n-1}(x)+b(x)Q_{n-2}(x).$$
By Lemma~\ref{lem-BF} we have
$$Q_n(x)=\frac{[Q_1(x)-\la_2Q_0(x)]\la_1^n+[\la_1Q_0(x)-Q_1(x)]\la_2^n}{\la_1-\la_2},$$
where $\la_1$ and $\la_2$ are the roots of quadric equation
$\la^2-a(x)\la-b(x)=0$. The reader may wonder when $Q_n(x)$ can be
factorized by means of Lemma~\ref{lem-decomp}. The solution
obviously depends on the initial values $Q_0(x)$ and $Q_1(x)$. For
example, if $Q_1(x)=a(x)Q_0(x)$, then
$$Q_n(x)=Q_0(x)\frac{\la_1^{n+1}-\la_2^{n+1}}{\la_1-\la_2}
=Q_0(x)a(x)^{\delta_n}\prod_{s=1}^{\lrf{n/2}}\left[a^2(x)+4b(x)\cos^2\frac{s\pi}{n+1}\right],\quad
n\ge 0$$ by Lemma \ref{lem-decomp}, where $\delta_n=0$ for even $n$
and $1$ otherwise. Similarly, if $Q_1(x)=a(x)Q_0(x)/2$, then
$$Q_n(x)=\frac{1}{2}Q_0(x)\left(\la_1^{n}+\la_2^{n}\right)
=\frac{1}{2}Q_0(x)a(x)^{\delta_n}\prod_{s=1}^{\lrf{n/2}}\left[a^2(x)+4b(x)\cos^2\frac{(2s-1)\pi}{2n}\right],\quad
n\ge 0.$$

It is worth pointing out that our approach to independence
polynomials may be adapted to other graph polynomials. We take the
adjoint polynomial of a graph as an example. Given a graph
$G=(V,E)$, let $\alpha(G,k)$ be the number of partitions of $V$ into
$k$ non-empty independent sets. The adjoint polynomial of $G$ is
defined by
$$h(G;x)=\sum_{k=1}^{n}\alpha(\overline{G},k)x^k,$$
where $n$ is the order of $G$ and $\overline{G}$ is the complement
of $G$. The notion of the adjoint polynomials was introduced by
Liu~\cite{Liu87} to study the chromatic uniqueness of graphs.
Clearly, two graphs have the same chromatic polynomials if and only
if their complements have the same adjoint polynomials. There are
some similar equalities for adjoint polynomials to those for
independence polynomials occurred in Lemma~\ref{lem-IP}. For
example, let $e=uv\in E(G)$. If $e$ is not an edge of any triangle
of $G$, then
$$h(G;x)=h(G-e;x)+xh(G-\{u,v\};x).$$
It follows that the recurrence relation
\begin{equation}\label{h}
h(G_n;x)=xh(G_{n-1};x)+xh(G_{n-2};x)
\end{equation}
holds for certain classes of graphs $G_n$, including the path
$P_n$ and the cycle $C_n$ (see~\cite{DTLH02} for details).
Now set $h(P_0;x):=1, h(P_1;x)=x$ and $h(C_0;x):=2, h(C_1;x):=x$,
which are well-defined extension by (\ref{h}). Then we have
$$h(P_n;x)=x^{\lrf{(n+1)/2}}\prod_{s=1}^{\lrf{n/2}}\left(x+4\cos^2\frac{s\pi}{n+1}\right)$$
and
$$h(C_n;x)=x^{\lrf{(n+1)/2}}\prod_{s=1}^{\lrf{n/2}}\left(x+4\cos^2\frac{(2s-1)\pi}{2n}\right).$$
Dong et al.~\cite{DTLH02} have obtained these two factorizations by
a somewhat different method, which is one of the motivations of the
present paper.

It often occurs that the unimodality of a polynomial is known, yet
to determine the exact number and location of modes is a much more
difficult task. The case for polynomials with only real zeros is
somewhat different. Let $Q(x)=\sum_{k=0}^{n}a_kx^k$ be a polynomial
with positive coefficients. Darroch~\cite{Dar64} showed that if all
zeros $r_1,\ldots,r_n$ of $Q(x)$ are real, then $Q(x)$ has at most
two modes and each mode $m$ satisfies $|m-M|<1$, where
$$M:=\frac{Q'(1)}{Q(1)}=\frac{\sum_{k=0}^nka_k}{\sum_{k=0}^na_k}=\sum_{k=1}^n\frac{1}{1-r_k}.$$
In \cite{LM02}, Levit and Mandrescu conjectured that the mode of the
independence polynomial of the centipede $V_n^{(1)}$ is $n-f(n)$,
where $f(n)$ is given by
$$f(n)=\left\{
  \begin{array}{ll}
    1+\lrf{n/5}, & \hbox{if $2\le n\le 6$;} \\
    f(2+(n-2)\bmod 5)+2\lrf{(n-2)/5}, & \hbox{if $n\ge 7$.}
  \end{array}
\right.$$ We can apply Darroch's result to give a counterexample.
Let us examine $I(V_{142}^{(1)};x)$. We have by (\ref{exp-v})
$$I(V_{142}^{(1)};x)=(1+x)^{71}\prod_{s=1}^{71}\left[1+x\left(1+4\cos^2\frac{s\pi}{144}\right)\right].$$
Thus
$$M=\sum_{k=1}^{142}\frac{1}{1-r_k}=\frac{213}{2}-\sum_{s=1}^{71}\frac{1}{2+4\cos^2\frac{s\pi}{144}}\approx 86.0487$$
by means of the computer program MATHEMATICA 5.2. However,
$$142-f(142)=142-57=85,$$
which is not a mode of $I(V_{142}^{(1)};x)$ by Darroch's result.
This gives a negative answer to the conjecture of Levit and
Mandrescu. Actually, the exact (unique) mode of $I(V_{142}^{(1)};x)$
is $86$ again by MATHEMATICA 5.2:
$$I(V_{142}^{(1)};x)\approx\cdots+7.18929\times 10^{60}x^{85}+7.33386\times 10^{60}x^{86}+7.24852\times
10^{60}x^{87}+\cdots.$$

Finally, we refer the reader to
\cite{Bra06,Bre89,Bre92,Bre94,BRW94,MW08,Sta89,WYjcta05,WYeujc05,WYjcta07}
for more results about unimodality problems of sequences and
polynomials.
\section*{Acknowledgements}
\hspace*{\parindent}
The authors thank the anonymous referees for their helpful comments and suggestions.

\end{document}